\newtheorem{theorem}{Theorem}%[section]
\newaliascnt{lemma}{theorem}
\newtheorem{lemma}[lemma]{Lemma}
\newaliascnt{proposition}{theorem}
\newtheorem{proposition}[proposition]{Proposition}
\newaliascnt{corollary}{theorem}
\newtheorem{corollary}[corollary]{Corollary}
\newaliascnt{conjecture}{theorem}
\theoremstyle{definition}
\newaliascnt{example}{theorem}
\newtheorem{example}[example]{Example}
\newtheorem*{remark}{Remark}
\newtheorem{cond}{Condition}[section]
\newtheorem{paramconcave}[cond]{Parameter Assumption}
\newtheorem{paramconvex}[cond]{Parameter Assumption}
\newtheorem{concavecond}[cond]{Concave Condition}
\newtheorem{convexcond}[cond]{Convex Condition}
\newtheorem{concavecondweak}[cond]{Weaker Concave Condition}
\newtheorem{convexcondweak}[cond]{Weaker Convex Condition}
\def\tagform@#1{\maketag@@@{\ignorespaces#1\unskip\@@italiccorr}}
\let\orgtheequation\theequation
\def\theequation{(\orgtheequation)}
\def\equationautorefname~{}
\newcommand{\arxiv}[1]{%
 \href{http://front.math.ucdavis.edu/#1}{ArXiv:#1}}
\newcommand{\link}{%
\href{https://www.dropbox.com/s/z0tlkypbm4xknlx/Lattice_Counting.pdf?dl=0}{www.math.illinois.edu/~laugesen/}}
\DeclareMathOperator*{\argmax}{\arg\!\max}
\newcommand{\e}{\varepsilon}
\newcommand{\N}{{\mathbb N}}
\newcommand{\R}{{\mathbb R}}
\newcommand{\area}{\operatorname{Area}}
\newcommand{\ud}{\,\mathrm{d}}
\newcommand{\sN}{\widetilde{N}}
\newcommand{\sGamma}{\widetilde{\Gamma}}
\newcommand{\se}{{\mathcal E}}
\newcommand{\sL}{\widetilde{L}}
\newcommand{\sM}{\widetilde{M}}
\newcommand{\sO}{\widetilde{O}}
\newcommand{\shiftf}{\widetilde{f}}
\newcommand{\sg}{\widetilde{g}}
\newcommand{\sa}{\widetilde{\alpha}}
\begin{document}

\title{Shifted lattices and asymptotically optimal ellipses}
\author[]{Richard S. Laugesen and Shiya Liu}
\address{Department of Mathematics, University of Illinois, Urbana,
IL 61801, U.S.A.}
\email{\ Laugesen\@@illinois.edu \quad sliu63\@@illinois.edu}
\date{\today}

\keywords{Translated lattice, concave curve, convex curve, $p$-ellipse, spectral
optimization, Dirichlet Laplacian, Schr\"odinger eigenvalues, harmonic oscillator.}
\subjclass[2010]{\text{Primary 35P15. Secondary 11P21, 52C05}}

\begin{abstract}
Translate the positive-integer lattice points in the first quadrant by some amount in the horizontal and vertical directions. Take a decreasing concave (or convex) curve in the first quadrant and construct a family of curves by rescaling in the coordinate directions while preserving area. Consider the curve in the family that encloses the greatest number of the shifted lattice points: we seek to identify the limiting shape of this maximizing curve as the area is scaled up towards infinity. 

The limiting shape is shown to depend explicitly on the lattice shift. The result holds for all positive shifts, and for negative shifts satisfying a certain condition. When the shift becomes too negative, the optimal curve no longer converges to a limiting shape, and instead we show it degenerates.

Our results handle the $p$-circle $x^p+y^p=1$ when $p>1$ (concave) and also when $0<p<1$ (convex). Rescaling the $p$-circle generates the family of $p$-ellipses, and so in particular we identify the asymptotically optimal $p$-ellipses associated with shifted integer lattices. 

The circular case $p=2$ with shift $-1/2$ corresponds to minimizing high eigenvalues in a symmetry class for the Laplacian on rectangles, while the straight line case ($p=1$) generates an open problem about minimizing high eigenvalues of quantum harmonic oscillators with normalized parabolic potentials.  

\end{abstract}

\maketitle

\section{\bf Introduction}

Among all ellipses centered at the origin with given area, consider the one enclosing the maximum number of positive integer lattice points. Does it approach a circular shape as the area tends to infinity? Antunes and Freitas \cite{AF13}  showed the answer is yes. We tackle a variant of the problem in which the lattice is translated by some increments in the $x$- and $y$-directions, and show the asymptotically optimal ellipse is no longer a circle but an ellipse whose semi-axis ratio depends explicitly on the translation increments. This optimal ratio succeeds in ``balancing'' the horizontal and vertical empty strip areas created by the translation of the lattice; see \autoref{fig:opt_ellipse}. The precise statement is given in \autoref{th:S_limit_shift}.
\begin{figure}
\includegraphics[scale=.4]{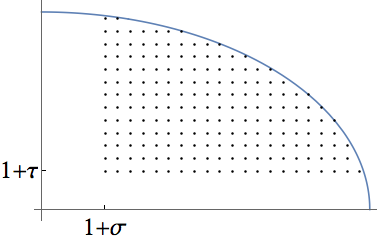}
\caption{\label{fig:opt_ellipse}An ellipse that maximizes (among ellipses with the same area) the number of enclosed positive-integer lattice points shifted by $4$ units horizontally and $2$ units vertically. This optimal ellipse roughly balances the areas of the  horizontal and vertical empty strips; see \autoref{th:S_limit_shift}.}
\end{figure}

Generalized ellipses obtained by stretching a (concave or convex) smooth curve can be handled by our methods too, in \autoref{th:S_limit_shift_general}. The results hold for all positive translations, and for small negative translations that satisfy a computable, curve-dependent criterion. 

When the curve is a straight line, one arrives at an open problem for right triangles that contain the most lattice points. The shape of these triangles exhibits a surprising clustering behavior as the area tends to infinity, as  revealed by our numerical investigations in  \autoref{sec:numerics}. This clustering conjecture has been investigated recently in the unshifted case by Marshall and Steinerberger \cite{marshall_steinerberger}.

\autoref{section_spectral} motivates this paper by connecting to spectral minimization problems for the Dirichlet Laplacian, and raises conjectures for the quantum harmonic oscillator and for a whole family of such Schr\"{o}dinger eigenvalue problems. The recent advances on high eigenvalue minimization  began with work of Antunes and Freitas \cite{AF12,AF13,AF16,f16}, and continued with contributions from van den Berg, Bucur and Gittins \cite{BBG16a}, van den Berg and Gittins \cite{BBG16b}, Bucur and Freitas \cite{BF13}, Gittins and Larson \cite{GL17}, Larson \cite{Larson}, and Marshall \cite{marshall}. 

We show in \autoref{section_spectral} that the original result of Antunes and Freitas does not extend to the subclass of symmetric eigenvalues. Instead, the optimal rectangle degenerates in the limit. 

\begin{remark}
The lattice point counting estimates in this paper are similar to those used for the Gauss circle problem, which aims for accurate asymptotics on the counting function inside the circle (and other closed curves) as the area grows to infinity. The best known error estimate on the Circle Problem is due to Huxley \cite{Hux03}. 

The lattice counting formulas in our paper differ somewhat from that work, because we consider only one quadrant of lattice points and our regions contain empty strips due to the translation of the lattice. Further, we focus on proving suitable inequalities (rather than asymptotics) for the counting function, in order to prevent the maximizing shape from degenerating. In essence, we develop inequalities that trade off the empty regions in the vertical and horizontal directions. After degeneration has been ruled out, we can invoke asymptotic formulas with error terms that need not be as good as Huxley's in order to prove convergence to a limiting shape.
\end{remark} 

\section{\bf  Results}\label{section:results}

Consider a strictly decreasing curve $\Gamma$ lying in the first quadrant with $x$- and $y$-intercepts at $L$. Represent the curve as the graph of $y= f(x)$ where $f$ is strictly decreasing for $x\in[0,L]$. Denote the inverse function of $f$ as $g(y)$ for $y \in [0,L]$. Now compress the curve by a factor $s>0$ in the $x$-direction and stretch it by the same factor in the $y$-direction: 
 \begin{equation*}
 \Gamma(s) = \text{graph of } sf(sx).
 \end{equation*} 
Next scale the curve $\Gamma(s)$ by a factor $r>0$:
\begin{equation*}
r\Gamma(s)= \text{graph of } rsf(sx/r) .
\end{equation*} 

Given numbers $\sigma, \tau>-1$, consider the translated or \emph{shifted} positive-integer lattice
\[
(\N+\sigma) \times (\N+\tau) ,
\]
which lies in the open first quadrant. Define the shifted-lattice counting function under the curve $s\Gamma(s)$ to be
\begin{align*}
N(r,s) &=\text{number of shifted positive-integer lattice points lying inside or on $r\Gamma(s)$ } \\
&=\# \big\{ (j,k)\in\mathbb{N} \times \mathbb{N}:k+\tau\leq rsf \big( (j+\sigma)s/r \big) \big\}. 
\end{align*}
The set $S(r)$ consists of $s$-values that maximize $N(r,s)$, that is, 
\[
S(r)= \argmax_{s>0} N(r,s), \qquad r>0 .
\] 

Write
\[
x^- = 
\begin{cases}
0 , & x \geq 0, \\
|x| , & x < 0
\end{cases} .
\]

Our first theorem will say that the maximizing set $S(r)$ is bounded, under either of the following conditions on the shift parameters $\sigma,\tau>-1$. 
\begin{paramconcave}\label{pa:concave}
$\Gamma$ is concave and strictly decreasing, with
\begin{equation}\label{eq:concave_f_condition_shift}
\max\Big\{f\big(\frac{1-\sigma^-}{2-\sigma^-}L\big),g\big(\frac{1-\tau^-}{2-\tau^-}L\big)\Big\} <2\Big(\frac{1}{2}-\sigma^--\tau^-\Big)L. 
\end{equation} 
\end{paramconcave}
\begin{paramconvex}\label{pa:convex}
$\Gamma$ is convex and strictly decreasing, with 
\begin{equation}\label{eq:convex_sigma}
\min \Big\{(1-\sigma^-)f\big(\frac{1-\sigma^-}{2-\sigma^-}L\big), (1-\tau^-)g\big(\frac{1-\tau^-}{2-\tau^-}L\big)\Big\} > 2(\sigma^- + \tau^-) L
\end{equation}
and
\begin{align}
\mu_f(\sigma) & \overset{\text{def}}{=} \min \Big\{ (1+\sigma)f\big(\frac{1+\sigma}{2+\sigma}x\big)-f(x) : \frac{1+\sigma}{2+\sigma}L \leq x \leq L\Big\} > 0, \label{eq:mu_f} \\
\mu_g(\tau) & \overset{\text{def}}{=} \min \Big\{ (1+\tau)g\big(\frac{1+\tau}{2+\tau}y\big)-g(y) : \frac{1+\tau}{2+\tau}L \leq y \leq L\Big\} > 0. \label{eq:mu_g}
\end{align}
\end{paramconvex}
When $\sigma,\tau \geq 0$,  
conditions~\autoref{eq:concave_f_condition_shift} and \autoref{eq:convex_sigma} hold automatically (using that $0<f(x)<L$ and $0<g(y)<L$ when $x,y \in (0,L)$) and conditions \autoref{eq:mu_f} and \autoref{eq:mu_g} also hold (using that $f$ and $g$ are strictly decreasing and positive). Thus the Parameter Assumptions are significant only when $\sigma<0$ or $\tau<0$. They are used to obtain an upper bound on the counting function: see the comments after \autoref{prop:ineq_shift} and \autoref{prop:ineq_convex_shift}. 
\begin{theorem}[Uniform bound on optimal stretch factors] \label{thm:s_bounded_shift}
If the curve $\Gamma$ and the shift parameters $\sigma,\tau> -1/2$ satisfy Parameter Assumption~\autoref{pa:concave} or \autoref{pa:convex}, then for each $\e>0$ one has 
\[
S(r) \subset \big[B(\tau,\sigma)^{-1}-\e,B(\sigma,\tau)+\e \big] \qquad \text{for all large $r$,}
\]
where 
\[
B(\sigma,\tau)= \frac{2+\sigma+\tau + \sqrt{ (2+\sigma + \tau)^2 - 4(\sigma+1/2)\tau )}}{2(\sigma+1/2)}.
\]
\end{theorem}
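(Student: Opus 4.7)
The plan is a competitor argument: I will show that any $s > B(\sigma,\tau) + \e$ is beaten by the fixed competitor $s_0 = 1$, and then obtain the lower bound on $s$ by a reflection symmetry exchanging the roles of the two coordinate axes.

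First I would invoke the two-sided bounds on the counting function supplied by the earlier Proposition~\ref{prop:ineq_shift} (concave $\Gamma$) or Proposition~\ref{prop:ineq_convex_shift} (convex $\Gamma$). The Parameter Assumptions~\ref{pa:concave} or \ref{pa:convex} are precisely what those propositions need to yield a matching upper bound when $\sigma$ or $\tau$ is negative; this is why the Parameter Assumptions enter the hypotheses here. Schematically, after absorbing lower-order terms, the relevant bounds should take the form
\[
N(r,s) \leq r^2 \area(\Gamma) - rL\bigl[(\sigma+\tfrac12) s + \tau\, s^{-1}\bigr] + o(r),
\]
where the two border contributions capture the horizontal empty strip (width $\sigma+\tfrac12$ times curve height $\sim rLs$) and the vertical empty strip; and, evaluated at the competitor $s_0 = 1$,
\[
N(r,1) \geq r^2 \area(\Gamma) - rL(2 + \sigma + \tau) + o(r),
\]
the coefficient $2+\sigma+\tau = (\sigma+1) + (\tau+1)$ coming from the crudest Riemann-sum-style lower estimate on the floor-function lattice count.

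Next I would subtract these two inequalities to obtain
\[
N(r,1) - N(r,s) \geq rL \bigl[(\sigma+\tfrac12) s + \tau/s - (2+\sigma+\tau)\bigr] + o(r).
\]
Multiplying the bracket by $s>0$ gives the quadratic $(\sigma+\tfrac12)s^2 - (2+\sigma+\tau)s + \tau$, whose leading coefficient is positive (since $\sigma > -\tfrac12$) and whose larger root is exactly $B(\sigma,\tau)$ by direct inspection of the quadratic formula. Consequently, for $s > B(\sigma,\tau) + \e$ the bracket exceeds some $\delta(\e) > 0$, which dominates the $o(r)$ error once $r$ is large; hence $N(r,s) < N(r,1)$ and such $s$ cannot lie in $S(r)$. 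For the lower bound $s \geq B(\tau,\sigma)^{-1} - \e$, I would exploit the reflection symmetry $(x,y) \leftrightarrow (y,x)$: under this reflection the graph of $f$ becomes the graph of $g$, the shift parameters $\sigma,\tau$ swap, and the stretch factor $s$ becomes $1/s$, while $N(r,s)$ is unchanged. Applying the competitor argument in the reflected coordinates forces $1/s \leq B(\tau,\sigma) + \e'$, which is equivalent to $s \geq B(\tau,\sigma)^{-1} - \e$ after readjusting the slack.

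The main obstacle is concentrated in the two-sided bounds on $N$ coming from Propositions~\ref{prop:ineq_shift} and \ref{prop:ineq_convex_shift}: extracting the explicit coefficients $(\sigma+\tfrac12)$ and $\tau$ in the upper bound, with error terms that are genuinely $o(r)$ uniformly on compact $s$-ranges, is the delicate part, and is where the Parameter Assumptions for negative shift do real work. Once those propositions are in hand, the present theorem reduces to the algebraic identity $(\sigma+\tfrac12) B + \tau B^{-1} = 2 + \sigma + \tau$ that is built into the definition of $B(\sigma,\tau)$.
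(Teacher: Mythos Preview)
Your overall strategy---compare against the competitor $s_0=1$ and read off the quadratic whose larger root is $B(\sigma,\tau)$---matches the paper exactly. But there is a genuine gap in how you invoke the upper bound.

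You attribute the estimate
\[
N(r,s) \leq r^2 \area(\Gamma) - rL\bigl[(\sigma+\tfrac12) s + \tau\, s^{-1}\bigr] + o(r)
\]
to Proposition~\ref{prop:ineq_shift} (or Proposition~\ref{prop:ineq_convex_shift} in the convex case). That is not what those propositions say: they give the cruder bound $N(r,s) \leq r^2\area(\Gamma) - C_1 rs + \sigma^-\tau^-$ with a \emph{curve-dependent} constant $C_1$ and no $s^{-1}$ term, valid only when $r \geq (1-\sigma^-)s/L$. The refined coefficient $(\sigma+\tfrac12)$ and the $o(r)$ error you want come instead from Corollary~\ref{cor:ineq_shift} (resp.\ Corollary~\ref{cor:ineq_convex_shift}), which crucially \emph{assumes $s$ is already bounded above and below away from zero}. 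You cannot apply that corollary to an arbitrary $s>B(\sigma,\tau)+\e$, since a priori $s\in S(r)$ could be as large as $rL/(1+\sigma)$ by Lemma~\ref{lemma:bound_shift}.

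The fix is a two-step bootstrap, exactly as in the paper. Step~1: use Lemma~\ref{lemma:bound_shift} to guarantee $r \geq (1-\sigma^-)s/L$ whenever $s\in S(r)$, then combine Proposition~\ref{prop:ineq_shift} with the lower bound for $N(r,1)$ from Lemma~\ref{lemma:lower_bound} to deduce a \emph{curve-dependent} bound on $S(r)$ (this is where the Parameter Assumption $C_1>0$ is actually used). Step~2: knowing now that $S(r)$ lies in a fixed compact interval, invoke Corollary~\ref{cor:ineq_shift} to obtain the refined upper bound, and rerun your competitor argument to extract the sharp constant $B(\sigma,\tau)$. Your write-up collapses these two steps into one, and in doing so applies a result outside its domain of validity.
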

The bounding constant $B(\sigma,\tau)$ depends only on the shift parameters, not on the curve $\Gamma$. The bounding constant $B(0,0)=4$ in the unshifted case is consistent with our earlier work \cite[Theorem~2]{Lau_Liu17}. 

\autoref{thm:s_bounded_shift} is proved in \autoref{section_proof_bounded}. Note it does not assume the curve is smooth. 

If the curve is smooth, then the optimal stretch set $S(r)$ for maximizing the lattice count is not only bounded but converges asymptotically to a computable value, as stated in the next theorem. First we state the smoothness conditions to be used. 
\begin{concavecond} \label{cond:concave}
$\Gamma$ is concave, and for some $(\alpha ,\beta) \in \Gamma$ with $\alpha,\beta>0$ one has $f\in C^2[0,\alpha], g \in C^2[0,\beta]$, with
\begin{align*}
& \text{$f'<0$ on $(0,\alpha]$, $f''<0$ on $[0,\alpha]$, $f''$ monotonic on $[0, \alpha]$,} \\
& \text{$g'<0$ on $(0,\beta]$, $g''<0$ on $[0,\beta]$, $g''$ monotonic on $[0, \beta]$.}
\end{align*}
\end{concavecond}
\begin{convexcond} \label{cond:convex}
$\Gamma$ is convex, and for some $(\alpha ,\beta) \in \Gamma$ with $\alpha,\beta>0$ one has $f\in C^2[\alpha,L], g \in C^2[\beta,L]$, with
\begin{align*}
& \text{$f'<0$ on $[\alpha,L)$, $f''>0$ on $[\alpha,L]$, $f''$ monotonic on $[\alpha,L]$,} \\
& \text{$g'<0$ on $[\beta,L)$, $g''>0$ on $[\beta,L]$, $g''$ monotonic on $[\beta,L]$.}
\end{align*}
\end{convexcond}
\begin{theorem}[Sufficient conditions for asymptotic balance of optimal curve]\label{th:S_limit_shift} If the curve $\Gamma$ and shift parameters $\sigma, \tau>-1/2 $ satisfy either Parameter Assumption~\autoref{pa:concave} and Concave Condition~\ref{cond:concave}, or Parameter Assumption~\autoref{pa:convex} and Convex Condition~\ref{cond:convex}, then the stretch factors maximizing $N(r,s)$ approach
\[
s^*=\sqrt{\frac{\tau + 1/2}{\sigma+1/2}}
\]
as $r \to \infty$, with 
\[
S(r) \subset [s^* -O(r^{-1/6}), s^* + O(r^{-1/6})],
\]
and the maximal lattice count has asymptotic formula
\[
\max_{s>0} N(r, s) = r^2 \area(\Gamma)-2rL \sqrt{(\sigma+1/2)(\tau+1/2)} + O(r^{2/3}).
\]
In particular, when the shift parameters $\sigma$ and $\tau$ are equal, the optimal stretch factors for maximizing $N(r,s)$ approach $s^*=1$ as $r \to \infty$.
\end{theorem}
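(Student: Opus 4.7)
The plan is to establish the uniform two-term asymptotic expansion
\[
N(r,s) = r^2\area(\Gamma) - rL\Bigl[(\sigma+\tfrac{1}{2})s + (\tau+\tfrac{1}{2})s^{-1}\Bigr] + O(r^{2/3})
\]
valid for $s$ in the compact interval supplied by \autoref{thm:s_bounded_shift}, and then to minimize the $s$-dependent correction. The coefficients $\sigma+1/2$ and $\tau+1/2$ have a suggestive interpretation: they represent empty strips along the two axes, consisting of the shift plus an extra $1/2$ arising from midpoint rounding in the Riemann sum and from the average of the fractional parts of the floors in the counting function.

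To derive the expansion, I would decompose
\[
N(r,s) = \sum_{j=1}^{j_{\max}} \bigl\lfloor rsf((j+\sigma)s/r) - \tau \bigr\rfloor = \sum_j \bigl[rsf((j+\sigma)s/r) - \tau\bigr] - \sum_j \bigl\{rsf((j+\sigma)s/r) - \tau\bigr\}.
\]
Applying the midpoint rule (unit spacing, midpoints at $j+\sigma$) to the first sum yields
$\sum_j rsf((j+\sigma)s/r) = r^2\int_{(1/2+\sigma)s/r}^{L} f(u)\,du + O(1) = r^2\area(\Gamma) - (\sigma+1/2)rsL + O(1)$. The constant contribution $-\tau\,j_{\max}$ equals $-\tau rL/s + O(1)$, using that $j_{\max} = rL/s + O(1)$ by the smoothness of $g$ near $0$ supplied by Concave Condition~\ref{cond:concave} or Convex Condition~\ref{cond:convex}. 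For the fractional-part sum, equidistribution supplies the mean $(1/2)j_{\max}$, while a van der Corput estimate on the principal portion of the curve (where $f''$ is nonvanishing and monotone) bounds the deviation by $O(j_{\max}^{2/3}) = O(r^{2/3})$; the short tail near the intercept, outside the $C^2$ regime, contributes only $O(1)$. Assembling these pieces produces the expansion, uniformly in $s$ on the bounded interval.

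With the expansion in hand, set $h(s) = (\sigma+1/2)s + (\tau+1/2)/s$. A direct calculation yields a unique minimizer $s^* = \sqrt{(\tau+1/2)/(\sigma+1/2)}$ with value $h(s^*) = 2\sqrt{(\sigma+1/2)(\tau+1/2)}$, and $h$ is strongly convex on compact subsets of $(0,\infty)$, so $h(s) - h(s^*) \geq c(s-s^*)^2$ on our bounded interval. For any maximizer $\hat s \in S(r)$ the inequality $N(r,\hat s) \geq N(r,s^*)$ combined with the expansion forces $rL\bigl(h(\hat s) - h(s^*)\bigr) \leq O(r^{2/3})$, giving $|\hat s - s^*|^2 = O(r^{-1/3})$ and hence $|\hat s - s^*| = O(r^{-1/6})$. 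Evaluating the expansion at $\hat s$ (absorbing the resulting $rL\cdot O(r^{-1/3}) = O(r^{2/3})$ perturbation of $h$) then yields the stated asymptotic for $\max_{s>0} N(r,s)$. The main obstacle will be the uniform $O(r^{2/3})$ bound on the fractional-part sum together with the careful endpoint accounting needed to extract the precise coefficients $\sigma+1/2$ and $\tau+1/2$; in particular, the smoothness conditions must be strong enough near the intercepts — where $f'$ or $f''$ may blow up — for van der Corput's method to apply on the interior of the curve and for the tail contributions there to remain $O(1)$.
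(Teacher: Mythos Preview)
Your overall strategy matches the paper's: invoke \autoref{thm:s_bounded_shift} for boundedness of $S(r)$, establish the two-term asymptotic
\[
N(r,s) = r^2\area(\Gamma) - rL\bigl[(\sigma+\tfrac12)s + (\tau+\tfrac12)s^{-1}\bigr] + O(r^{2/3})
\]
uniformly for $s$ in a compact interval, and then compare $N(r,\hat s)$ with $N(r,s^*)$ to force $|\hat s - s^*| = O(r^{-1/6})$. Your strong-convexity argument for this last step is equivalent to the paper's \autoref{le:squarecompletion}.

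Where you diverge is in the derivation of the asymptotic. The paper does not work directly with the shifted sum; instead (see \autoref{th:asy_pos} and \autoref{th:asy_pos_convex}) it translates the origin to $(1+\sigma,1+\tau)$, truncates $r\Gamma(s)$ to the new first quadrant, and applies previously established unshifted lattice-point propositions to the resulting curve $\widetilde\Gamma$, afterwards correcting for the first row and column and for the strips near the axes. This reduction buys modularity: the delicate van der Corput work, including the split at $(\alpha,\beta)$ and the piecewise-monotone handling of $f''$, is imported wholesale from earlier papers rather than redone. Your direct decomposition (midpoint rule plus equidistribution of fractional parts) is the more transparent, self-contained route, and it makes the origin of the coefficients $\sigma+\tfrac12$ and $\tau+\tfrac12$ visible in a single calculation. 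The cost is that you must confront the endpoint behaviour yourself: the midpoint-rule error is not $O(1)$ over all of $[0,L]$ when $f''$ blows up near $L$ (as it does for the circle), so you will in practice need the same split at $(\alpha,\beta)$, counting by columns via $f$ on $[0,\alpha]$ and by rows via $g$ on $[0,\beta]$, that underlies the paper's cited propositions. You flag this correctly as the main obstacle; once you carry it out, the two arguments converge.
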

The theorem follows from \autoref{th:S_limit_shift_general} below, which has weaker hypotheses. 

We call the optimally stretched curve ($s=s^*$) ``asymptotically balanced'' in terms of the shift parameters, because the optimal shape balances the areas of the empty strips that are created by translation of the lattice: a horizontal rectangle of width $rL/s^*$ and height $\tau+1/2$ has the same area as a vertical rectangle of height $rs^*L$ and width $\sigma+1/2$. (The ``$+1/2$'' arises from thinking of each lattice point as the center of a unit square.) Further, subtracting these two areas, each of which equals $rL \sqrt{(\sigma+1/2)(\tau+1/2)}$, gives a heuristic derivation of the order-$r$ correction term in the theorem. 

\begin{figure}
\includegraphics[scale=.4]{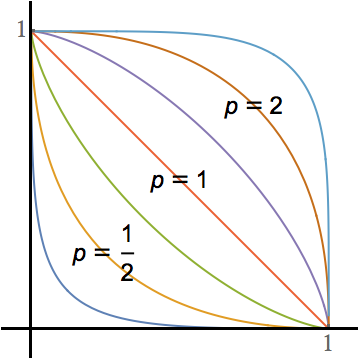}
\caption{\label{p-ellipsefig}The family of $p$-circles $x^p+y^p=1$, for $0<p<\infty$. \autoref{ex:circle_shift} and \autoref{ex:p_ellipse_shift} consider $p=2$ and $p=1/2$, respectively.}
\end{figure}
\begin{example}[Sufficient condition on shift parameters for the circle]\label{ex:circle_shift} When the curve $\Gamma$ is the portion of the unit circle in the first quadrant, one takes $L=1, f(x) = \sqrt{1-x^2}$, and $\alpha=\beta=1/\sqrt{2}$. Notice $f$ is smooth and concave, with monotonic second derivative. By symmetry it suffices to consider $\sigma \leq \tau$. When $\sigma \leq \tau\leq 0$, Parameter Assumption~\ref{pa:concave} says 
\[
\sqrt{1-\big(\frac{1+\sigma}{2+\sigma}\big)^2} < 2\sigma+2\tau + 1. 
\]
When $\sigma \leq 0\leq \tau$, equality in Parameter Assumption~\ref{pa:concave} would give a straight line. The resulting allowable region of $(\sigma,\tau)$-shift parameters for \autoref{th:S_limit_shift} is plotted on the left side of \autoref{fig:allowable_shift}.
\begin{figure}
  \includegraphics[width=.3\linewidth]{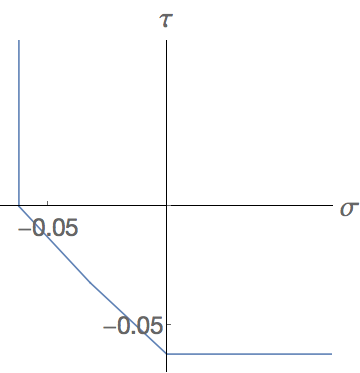}
 \hspace{1cm}
  \includegraphics[width=.3\linewidth]{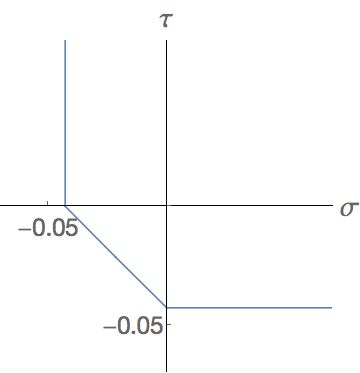}
\caption{\label{fig:allowable_shift}The allowable shift parameters $(\sigma, \tau)$ for \autoref{th:S_limit_shift} form the regions above the plotted curves, in the special cases where $\Gamma$ is a circle (figure on the left) and a $p$-circle with $p = 1/2$ (figure on the right). The intercepts are at approximately $-0.06$ (left figure) and $-0.04$ (right figure). The straight lines in the second and fourth quadrants are vertical and horizontal, respectively. The curves joining the intercepts are not quite straight lines. See \autoref{ex:circle_shift} and \autoref{ex:p_ellipse_shift}.}
\end{figure}
\end{example}
\begin{example}[Sufficient condition on shift parameters for $p$-circle with $p=1/2$]\label{ex:p_ellipse_shift}
Suppose $\Gamma$ is the part of the $1/2$-circle lying in the first quadrant, so that $L=1, f(x) = (1-x^{1/2})^2$, and take $\alpha=\beta=1/4$. Notice $f$ is smooth and convex, with monotonic second derivative $f''(x)=\frac{1}{2}x^{-3/2}$. The region of allowable shift parameters for \autoref{th:S_limit_shift} can be found numerically from Parameter Assumption~\ref{pa:convex}, as shown on the right side of \autoref{fig:allowable_shift}. 
\end{example}
Next we define weaker smoothness conditions. Let $(\alpha ,\beta)$ be a point on the curve $\Gamma$ with $\alpha,\beta>0$.
\begin{concavecondweak}\label{cond-weak-concave} Suppose $\Gamma$ is concave, and:
\begin{itemize}
 \item $f \in C^2(0,\alpha], f'<0, f''<0$, and a partition $0=\alpha_0<\alpha_1 < \dots < \alpha_l = \alpha$ exists such that $f''$ is monotonic on $(\alpha_{i-1},\alpha_i)$ for each $i=1,2,\dots,l$; 
 \item $g \in C^2(0,\beta], g'<0, g''<0$, and a partition $0=\beta_0<\beta_1 < \dots < \beta_m = \beta$ exists such that $g''$ is monotonic on $(\beta_{i-1},\beta_i)$ for each $i=1,2,\dots,m$; 
\item positive functions $\delta(r)$ and $\epsilon(r)$ exist such that
\begin{align}
\delta(r) = O(r^{-2a_1}) , \qquad & f''\big(\delta(r)\big)^{-1} = O(r^{1-4a_2}) , \label{eq:f_sup_shift}\\
\epsilon(r) = O(r^{-2b_1}) , \qquad & g''\big(\epsilon(r)\big)^{-1} = O(r^{1-4b_2}) , \label{eq:g_sup_shift}
\end{align}
as $r \to \infty$, for some numbers $a_1,a_2,b_1,b_2>0$;
\item and define $a_3=1/2, b_3=1/2$.
\end{itemize}
(The second condition in \autoref{eq:f_sup_shift} says that $f''(x)$ cannot be too small as $x \to 0$.)  
\end{concavecondweak}

\begin{convexcondweak}\label{cond-weak-convex} Suppose $\Gamma$ is convex, and:
\begin{itemize}
 \item $f \in C^2[\alpha,L), f'<0, f''>0$, and a partition $\alpha=\alpha_0<\alpha_1 < \dots < \alpha_l = L$ exists such that $f''$ is monotonic on $(\alpha_{i-1},\alpha_i)$ for each $i=1,2,\dots,l$;  
 \item $g \in C^2[\beta,L), g'<0, g''>0$, and a partition $\beta=\beta_0<\beta_1 < \dots < \beta_m = L$ exists such that $g''$ is monotonic on $(\beta_{i-1},\beta_i)$ for each $i=1,2,\dots,m$; 
\item positive functions $\delta(r)$ and $\epsilon(r)$ exist such that
\begin{align}
\delta(r) = O(r^{-2a_1}) , \qquad & f''\big(L-\delta(r)\big)^{-1} = O(r^{1-4a_2}) , \label{eq:f_sup_convex}\\
\epsilon(r) = O(r^{-2b_1}) , \qquad & g''\big(L-\epsilon(r)\big)^{-1} = O(r^{1-4b_2}) , \label{eq:g_sup_convex}
\end{align}
as $r \to \infty$, for some numbers $a_1,a_2,b_1,b_2>0$;
\item and suppose $f(x) = L+O(x^{2a_3})$ as $x \to 0$, and $g(y) = L+O(y^{2b_3})$ as $y \to 0$, for some numbers $a_3,b_3>0$.
\end{itemize}
(The last condition says $\Gamma$ cannot approach the axes too rapidly near the intercept points.)
\end{convexcondweak}

Concave Condition~\ref{cond:concave} implies Weaker Concave Condition~\ref{cond-weak-concave}, by choosing $\delta(r)=\epsilon(r)=r^{-1}$, $a_1 = b_1 = 1/2$ and $a_2 = b_2 = 1/4$, and noting that $f''(0) \neq 0, g''(0) \neq 0$. The same reasoning shows Convex Condition~\ref{cond:convex} implies Weaker Convex Condition~\ref{cond-weak-convex} with $a_3 = b_3 = 1/4$, since 
\[
g(L)=0, \ g'(L) \leq 0, \ g''(L)>0 \quad \Longrightarrow \quad g(L-y) \geq cy^2 \text{\ for small $y>0$}
\]
where $c>0$, and substituting $y=\sqrt{x/c}$ gives $L-f(x) \leq \sqrt{x/c}$ for small $x>0$, and similarly for $g$. 

Thus \autoref{th:S_limit_shift} follows immediately from the next result.
\begin{theorem}[Weaker conditions for asymptotic balance of optimal curve]\label{th:S_limit_shift_general}
If the curve $\Gamma$ and shift parameters $\sigma, \tau>-1/2 $ satisfy either Parameter Assumption~\ref{pa:concave} and Weaker Concave Condition~\ref{cond-weak-concave}, or Parameter Assumption~\ref{pa:convex} and Weaker Convex Condition~\ref{cond-weak-convex}, then the stretch factors maximizing $N(r,s)$ approach
\[
s^*=\sqrt{\frac{\tau + 1/2}{\sigma+1/2}}
\]
as $r \to \infty$, with
\[
S(r) \subset \big[ s^*-O(r^{-\se}), s^*+O(r^{-\se}) \big]
\] 
where 
\[
\se=\min \{ \tfrac{1}{6},a_1,a_2,a_3,b_1,b_2,b_3 \} .
\]
Further, the maximal lattice count has asymptotic formula
\begin{equation} \label{eq:maximalasymptotic}
\max_{s>0} N(r, s) = r^2 \area(\Gamma)-2rL \sqrt{(\sigma+1/2)(\tau+1/2)} + O(r^{1-2\se}).
\end{equation}
\end{theorem}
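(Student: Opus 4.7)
My plan has three main steps: reduce $s$ to a bounded window, prove a uniform asymptotic formula for $N(r,s)$ on that window, and then extract quadratic localisation around $s^*$.

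By \autoref{thm:s_bounded_shift}, every $s\in S(r)$ lies in a fixed compact interval $I\subset(0,\infty)$ for all large $r$, so the remaining analysis need only be carried out uniformly in $s\in I$.

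The heart of the proof is a uniform asymptotic expansion
\[
N(r,s) = r^2\,\area(\Gamma) - rL\,h(s) + O(r^{1-2\se}),\qquad s\in I,
\]
where $h(s)=(\sigma+\tfrac12)s+(\tau+\tfrac12)/s$ depends only on the shift parameters. Heuristically, each admissible lattice point $(j+\sigma,k+\tau)$ is the centre of a unit square, so $N(r,s)$ should equal the area of the subregion under $y=rsf(sx/r)$ satisfying $x\geq\sigma+\tfrac12$ and $y\geq\tau+\tfrac12$; a direct computation of this area yields exactly the stated leading terms. To make this rigorous I would count lattice points horizontally, writing $N(r,s)=\sum_j\lfloor rsf((j+\sigma)s/r)-\tau\rfloor$, and compare each partial sum to its integral on the monotonicity pieces $[\alpha_{i-1},\alpha_i]$ of $f''$ using the sign of $f''$ to convert concavity/convexity into one-sided bounds. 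A dual count by columns, exploiting the partition $[\beta_{j-1},\beta_j]$ of $g''$, handles the portion of $\Gamma$ where $f$ is nearly vertical. The tails near the axes are trimmed by cutoffs of sizes $\delta(r)$ and $\epsilon(r)$, whose costs are controlled by the exponents $a_1,a_2,b_1,b_2$ (and $a_3,b_3$, which bound the deviation of $f$ and $g$ from $L$ in the convex case). The $r^{2/3}$ error rate, which is the source of the $\tfrac16$ in $\se$, is the best available from van der Corput--type estimates under a twice-differentiable curve with monotonic, nonvanishing second derivative.

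Once the expansion is in hand, the rest is elementary calculus. The function $h$ is smooth and strictly convex on $(0,\infty)$ with unique minimiser $s^*=\sqrt{(\tau+\tfrac12)/(\sigma+\tfrac12)}$, and $h(s)-h(s^*)\geq c(s-s^*)^2$ uniformly on $I$ for some $c>0$. For any $s\in S(r)$, the maximality $N(r,s)\geq N(r,s^*)$ together with the expansion applied at both $s$ and $s^*$ gives $crL(s-s^*)^2\leq O(r^{1-2\se})$, hence $|s-s^*|=O(r^{-\se})$. The asymptotic formula for $\max_s N(r,s)$ then follows by evaluating the expansion at any $s\in S(r)$ and noting $h(s^*)=2\sqrt{(\sigma+\tfrac12)(\tau+\tfrac12)}$, since the additional error $rL[h(s)-h(s^*)]=O(r(s-s^*)^2)=O(r^{1-2\se})$ is already absorbed. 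The main obstacle is the uniform asymptotic expansion itself: the delicate points are (i) balancing the horizontal and vertical lattice counts so that neither the nearly-horizontal nor the nearly-vertical part of $\Gamma$ dominates the error term, and (ii) handling the low-regularity boundary behaviour near the coordinate axes (concave case) or near the intercepts (convex case), where the Weaker Conditions replace $C^2$-smoothness by the quantitative exponents $a_1,\ldots,b_3$.
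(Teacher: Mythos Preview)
Your proposal is correct and follows essentially the same route as the paper: invoke \autoref{thm:s_bounded_shift} to confine $S(r)$ to a compact interval, establish the uniform two-term expansion $N(r,s)=r^2\area(\Gamma)-rL\,h(s)+O(r^{1-2\se})$ (which the paper packages separately as \autoref{th:asy_pos} and \autoref{th:asy_pos_convex}, proved via translation-truncation to reduce to the unshifted lattice results of \cite{Lau_Liu17,AL17}), and then compare $N(r,s)\geq N(r,s^*)$ to localise $s$ near $s^*$. The only cosmetic difference is in the final localisation step: you use the uniform quadratic lower bound $h(s)-h(s^*)\geq c(s-s^*)^2$ on $I$, while the paper gives an explicit square-completion lemma (\autoref{le:squarecompletion}) that performs the same inversion with explicit constants; both yield $|s-s^*|=O(r^{-\se})$.
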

The proof in \autoref{section_proof} relies on lattice point counting propositions developed in \autoref{section_concave}, \autoref{section_convex} and \autoref{section_lower_bound}. 

\begin{example}[$p$-circles]\label{ex:p_shift}
Suppose $\Gamma$ is the part of the $p$-circle $|x|^p+|y|^p=1$ lying in the first quadrant. When $p>1$ the curve is concave, and satisfies Weaker Concave Condition~\ref{cond-weak-concave} by \cite[Example~5]{Lau_Liu17}. When $0<p<1$ it is convex and satisfies Weaker Convex Condition~\ref{cond-weak-convex} by \cite[Example~4.3]{AL17}, noting that $a_3=b_3=p/2$ since $f(x)=1+O(x^p)$ as $x\to 0$ and $g(y)=1+O(y^p)$ as $y \to 0$. 

Thus \autoref{th:S_limit_shift_general} applies to each $p$-circle, $p \neq 1$. The allowable shift parameters can be determined numerically from Parameter Assumption~\ref{pa:concave} or \ref{pa:convex}, as in \autoref{ex:circle_shift} and \autoref{ex:p_ellipse_shift}. 
\end{example}

Next we show there can be no ``universal'' allowable region of negative shifts for \autoref{th:S_limit_shift}. Specifically, for each choice of negative shifts $\sigma, \tau < 0$, no matter how close to zero, a curve exists whose optimal stretch parameters grow to infinity or shrink to $0$ as $r \to \infty$. That is, the optimal curve degenerates in the limit. 
\begin{theorem}[Negative shift: optimal concave curve can degenerate]\label{th:neg_shift} If $-1<\sigma<0, \tau > -1$, then a concave $C^2$-smooth curve $\Gamma$ exists, with intercepts at $L=1$, such that for each $\epsilon \in (0,1)$ one has
\begin{equation}\label{eq:s_unbounded}
S(r) \subset (0,r^{\epsilon-1}) \cup (r^{1-\epsilon},\infty) 
\end{equation}
for all large $r$.
\end{theorem}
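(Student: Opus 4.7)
The plan is to exhibit one concave $C^2$ curve that is ``nearly rectangular'' and then show that for this curve the lattice count at the highly asymmetric test stretch $s = r$ beats every stretch $s$ in the intermediate window $[r^{\epsilon-1}, r^{1-\epsilon}]$. Explicitly, take $\Gamma$ to be the graph of
\[
f_p(x) = 1 - x^p,\qquad x\in[0,1],
\]
for some even integer $p \geq 2$. Then $\Gamma$ is concave, $C^2$, and has $L = 1$ and $\area(\Gamma) = p/(p+1)$. Choose $p = p(\sigma)$ so large that
\[
(1+\sigma)^p < \frac{1}{p+1},
\]
which is possible because $1+\sigma \in (0,1)$ makes the left side super-exponentially small while the right side is only polynomially small in $p$.

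At the test stretch $s = r$ the scaled region $r\Gamma(r)$ has width $rL/s = 1$ and height $rsL = r^2$, so only the first shifted lattice column (at $x = 1+\sigma < 1$) fits inside. The height of the region there is $rs \cdot f_p(s(1+\sigma)/r) = r^2 (1 - (1+\sigma)^p)$, so
\[
N(r, r) \;\geq\; r^2\bigl(1 - (1+\sigma)^p\bigr) - \tau - 1 .
\]

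For the upper bound on $N(r,s)$, write $u_j = (j+\sigma)s/r$ and use $\lfloor x\rfloor \leq x$ to get
\[
N(r, s) \;\leq\; \sum_{j=1}^{j_{\max}} \bigl(rs(1-u_j^p) - \tau\bigr) , \qquad j_{\max} = \lfloor r/s - \sigma\rfloor .
\]
A routine Euler--Maclaurin evaluation of $\sum_{j=1}^{j_{\max}} (j+\sigma)^p$, together with $j_{\max} + \sigma = r/s + O(1)$, gives $(s^{p+1}/r^{p-1})\sum_j (j+\sigma)^p = r^2/(p+1) + O(rs)$ uniformly in $s$. Substituting this into the bound above and using $j_{\max}(rs - \tau) \leq r^2 - \tau r/s - \sigma rs + O(1)$ yields
\[
N(r, s) \;\leq\; r^2 \area(\Gamma) + C(\sigma,\tau,p)\bigl(rs + r/s\bigr)
\]
with $C$ independent of $r$ and $s$. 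On the range $s \in [r^{\epsilon-1}, r^{1-\epsilon}]$ one has $rs + r/s \leq 2r^{2-\epsilon}$, hence $N(r, s) \leq r^2 \area(\Gamma) + 2C r^{2-\epsilon}$.

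Combining the two bounds,
\[
N(r,r) - N(r,s) \;\geq\; r^2 \Bigl(\tfrac{1}{p+1} - (1+\sigma)^p\Bigr) - 2C r^{2-\epsilon} - \tau - 1,
\]
which is strictly positive for all large $r$ by the choice of $p$. Hence no element of $S(r)$ lies in $[r^{\epsilon-1}, r^{1-\epsilon}]$, which is exactly \eqref{eq:s_unbounded}. The main obstacle is verifying the Euler--Maclaurin step uniformly in $s$: when $s$ is close to $r^{1-\epsilon}$ the sum has only a handful of terms and the integral approximation is delicate, so one may need to split $[r^{\epsilon-1}, r^{1-\epsilon}]$ into sub-regimes and handle the small-$j_{\max}$ case separately (for instance, one-term bounds on $rs\sum u_j^p$ already suffice when $j_{\max}$ is bounded).
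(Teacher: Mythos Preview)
Your approach is essentially the paper's own: choose a curve of the form $1-x^{2m}$ with $m$ large enough that $f(1+\sigma)>\area(\Gamma)$, observe that the single column at $x=1+\sigma$ already gives $N(r,r)\geq r^2 f(1+\sigma)-\tau-1$, and then show $N(r,s)=r^2\area(\Gamma)+o(r^2)$ for all $s\in[r^{\epsilon-1},r^{1-\epsilon}]$. The paper in fact uses $f(x)=1-\delta x^2-(1-\delta)x^{2m}$; the small $\delta x^2$ perturbation is inserted only to force $f''(0)<0$, so that the paper can invoke its two-term counting estimate (\autoref{th:asy_pos} via Concave Condition~\ref{cond:concave}) for the upper bound on $N(r,s)$ in the intermediate window. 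Your direct summation avoids that machinery and hence needs no perturbation, which is a mild simplification.

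Two small corrections. First, your uniformity worry is unfounded: for $s\leq r^{1-\epsilon}$ one has $j_{\max}\geq r^\epsilon-1\to\infty$, and since $j\mapsto(j+\sigma)^p$ is increasing for $j\geq 1$ the crude integral comparison already gives $\sum_{j=1}^{j_{\max}}(j+\sigma)^p\geq (j_{\max}+\sigma)^{p+1}/(p+1)-O(j_{\max}^p)$, which after multiplying by $s^{p+1}/r^{p-1}$ is $r^2/(p+1)+O(rs)$ uniformly, exactly as you claim. Second, when $\tau>0$ the step $\lfloor rs(1-u_j^p)-\tau\rfloor\leq rs(1-u_j^p)-\tau$ can produce \emph{negative} summands for $j$ near $j_{\max}$, so your displayed inequality for $N(r,s)$ is not literally valid; replace it by $\lfloor rs(1-u_j^p)-\tau\rfloor^+\leq rs(1-u_j^p)$, which is always true and changes the bound only by $O(r/s)$.
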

The construction is given in \autoref{section:proof_neg}. The point of the theorem is that as soon as one of the shift parameters is negative, a concave curve exists for which the maximizing stretch parameters approach either $0$ or $\infty$ as $r \to \infty$. 

For convex curves, we do not know an analogue of \autoref{th:neg_shift}: does a universal allowable region of $(\sigma,\tau)$ parameters exist in which \autoref{th:S_limit_shift} holds for all $C^2$-smooth convex decreasing curves? 

The ``bad'' curve in \autoref{th:neg_shift} can even be a quarter circle: 
\begin{proposition}[Negative shift: the optimal ellipse can degenerate]\label{prop:neg_ellipse}
If the curve $\Gamma$ is the quarter unit circle, and $\sigma , \tau>-1$ with either $\sigma \leq -2/5$ or $\tau \leq -2/5$, then for each $\epsilon \in (0,1)$ one has
\[
S(r) \subset (0,r^{\epsilon-1}) \cup (r^{1-\epsilon},\infty) \qquad \text{for all large $r$.}
\]
\end{proposition}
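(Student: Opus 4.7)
The plan is to exhibit a very large stretch factor $s_\star$ at which $N(r,s_\star)$ already exceeds $N(r,s)$ for every $s \in [r^{\epsilon-1}, r^{1-\epsilon}]$, thereby forcing every maximizer to lie outside that range. By the symmetry $f(x)=g(x)=\sqrt{1-x^2}$ of the quarter circle, it suffices to treat the case $\sigma \le -2/5$; the case $\tau \le -2/5$ is handled by the analogous row-by-row argument (using the change of variable $s\mapsto 1/s$), which produces a very small degenerate $s_\star$ instead.

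The witness is $s_\star := r/((1+\sigma)\sqrt{2})$, which exceeds $r^{1-\epsilon}$ for all large $r$ since $1+\sigma \le 3/5$. At this value $(1+\sigma)s_\star/r = 1/\sqrt{2}$, while for every $j \ge 2$ one has $(j+\sigma)s_\star/r \ge (2+\sigma)/((1+\sigma)\sqrt{2}) > 1$ (the last inequality rearranges to $\sigma<\sqrt{2}$, which is comfortably satisfied). Hence only the column $j=1$ contributes to $N(r,s_\star)$, giving
\[
N(r,s_\star) \ge \lfloor r^2/(2(1+\sigma)) - \tau \rfloor \ge r^2/(2(1+\sigma)) - |\tau| - 1.
\]

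Next I would derive an upper bound on $N(r,s)$ valid for all $s>0$. Bounding the contribution of the column $j=1$ separately by $rs$ (using $f \le 1$) and applying a standard Riemann-sum comparison to the remaining columns (valid because $u\mapsto rs\sqrt{1-u^2}$ is decreasing on $[s/r,1]$), one obtains
\[
N(r,s) \le \frac{\pi r^2}{4} + rs + (|\tau|+1)\Big(\frac{r}{s}+1\Big),
\]
which is $\pi r^2/4 + O(r^{2-\epsilon})$ uniformly for $s \in [r^{\epsilon-1}, r^{1-\epsilon}]$.

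The proof concludes with a numerical comparison. The hypothesis $\sigma \le -2/5$ forces $1+\sigma \le 3/5 < 2/\pi$, so $1/(2(1+\sigma)) - \pi/4 \ge 5/6 - \pi/4 > 0$, and consequently
\[
N(r,s_\star) - N(r,s) \ge r^2\Big(\frac{1}{2(1+\sigma)} - \frac{\pi}{4}\Big) - O(r^{2-\epsilon}) > 0
\]
for all large $r$, uniformly in $s \in [r^{\epsilon-1}, r^{1-\epsilon}]$. The main obstacle here is bookkeeping rather than substance: the gap at the threshold $\sigma=-2/5$ is only $5/6-\pi/4\approx 0.048$, so there is little room to spare in the error term (though any $o(r^2)$ estimate suffices). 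The value $-2/5$ is a convenient rounding of the sharp threshold $2/\pi-1\approx -0.363$, beyond which the single-column count at $s_\star$ would no longer surpass the area $\pi r^2/4$.
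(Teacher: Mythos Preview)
Your argument is correct and follows the same strategy as the paper: exhibit a degenerate stretch factor at which only the first column of shifted lattice points is captured, and show that this single-column count (of order $Cr^2$ with $C>\pi/4$) beats the area-dominated count $\pi r^2/4 + o(r^2)$ available to any $s\in[r^{\epsilon-1},r^{1-\epsilon}]$. The differences are cosmetic. The paper takes the simpler witness $s=r$, so that the $x$-intercept equals~$1$ and the single-column count is $\lfloor r^2 f(1+\sigma)-\tau\rfloor$; the numerical check then reduces to $f(3/5)=4/5>\pi/4$. You instead pick the optimized single-column witness $s_\star=r/\big((1+\sigma)\sqrt{2}\big)$, which yields the slightly larger constant $1/(2(1+\sigma))\ge 5/6$ and explains why $-2/5$ is a safe rounding of the true threshold $2/\pi-1$. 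For the upper bound on moderate $s$, the paper simply invokes \autoref{th:asy_pos} with $q=1-\epsilon$, whereas your elementary Riemann-sum estimate makes the proof self-contained at the cost of a small amount of bookkeeping. Either route works with room to spare.
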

The proof is in \autoref{section:proof_neg}. And in \autoref{section_spectral} we apply this result to Laplacian eigenvalue minimization on rectangles.

\section{\bf Concave curves --- counting function estimates} \label{section_concave}
In order to prove \autoref{th:S_limit_shift_general} we need to estimate the counting function. The curve $\Gamma$ is taken  to be concave decreasing in the first quadrant, throughout this section. Denote the horizontal and vertical intercepts by $x=L$ and $y=M$ respectively, where $L$ and $M$ are positive but not necessarily equal. Allowing unequal intercepts is helpful for some of the results below. 

We start with a preliminary $r$-dependent bound on the maximizing set $S(r)$. The proof of this bound also makes clear why $N(r,s)$ attains its maximum as a function of $s$, for each fixed $r$, so that the set $S(r)$ is well defined. 

\begin{lemma}[Linear-in-$r$ bound on optimal stretch factors for concave curves]\label{lemma:bound_shift}If $\sigma, \tau > -1$ then
\[
S(r)\subset \big[(1+\tau)/rM, rL/(1+\sigma)\big] \qquad \text{whenever $r \geq (2+\sigma+\tau)/\sqrt{LM}$.}
\]
\end{lemma}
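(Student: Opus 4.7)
The plan is to bound $S(r)$ by combining two independent observations: first, that $N(r,s)$ vanishes unless $s$ lies in the claimed interval, for purely geometric reasons; and second, that $N(r,s)\geq 1$ is attainable somewhere when $r$ is sufficiently large, using concavity. Together these force any maximizer into the stated interval. Well-definedness of $S(r)$ will then follow almost for free from the fact that $N(r,\cdot)$ is integer-valued and uniformly bounded.

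For the ``unless'' direction, I would first note that the curve $r\Gamma(s)$ has $x$-intercept $rL/s$ and $y$-intercept $rsM$. If $s > rL/(1+\sigma)$, then every shifted lattice point $(j+\sigma, k+\tau)$ with $j,k\in\N$ satisfies $j+\sigma \geq 1+\sigma > rL/s$, so it sits strictly to the right of the $x$-support of the curve and cannot be enclosed; hence $N(r,s)=0$. The symmetric argument using the $y$-intercept handles $s < (1+\tau)/rM$.

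For the nontrivial direction, I would exploit concavity by comparing $r\Gamma(s)$ to its chord from $(0,rsM)$ to $(rL/s,0)$. Concavity places the curve weakly above the chord, so it suffices to find one $s$ for which the point $(1+\sigma,1+\tau)$ lies on or below the chord, i.e.\
\[
\frac{(1+\sigma)s}{rL}+\frac{1+\tau}{rsM}\leq 1.
\]
Clearing denominators gives a quadratic in $s$ with discriminant nonnegative iff $r^2 LM \geq 4(1+\sigma)(1+\tau)$. By AM--GM, $(1+\sigma)+(1+\tau) \geq 2\sqrt{(1+\sigma)(1+\tau)}$, so the hypothesis $r\geq (2+\sigma+\tau)/\sqrt{LM}$ is a (slightly stronger) sufficient condition for the discriminant to be nonnegative. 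Any $s$ in the corresponding interval of roots then yields a lattice point enclosed under the curve, so $\max_s N(r,s) \geq 1$.

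Finally, to justify that $S(r)$ is well defined, I would observe that $N(r,\cdot)$ takes values in $\{0,1,2,\dots\}$ and is uniformly bounded above (for example by the bounding-rectangle lattice count $\lfloor rL/s\rfloor \lfloor rsM\rfloor \leq r^2 LM$), so its range is a finite set of integers and the maximum is attained. Combining the three ingredients, every maximizer satisfies $N(r,s)\geq 1$ and hence sits in $[(1+\tau)/rM,\, rL/(1+\sigma)]$. I do not foresee any real obstacle: the only manipulation that requires a little care is the AM--GM step, which is precisely what produces the clean form $(2+\sigma+\tau)/\sqrt{LM}$ in the hypothesis.
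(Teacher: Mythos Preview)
Your proposal is correct and follows essentially the same route as the paper: both argue that $N(r,s)=0$ outside the claimed interval by comparing intercepts, and both use concavity (curve above chord) to show some $s$ encloses the point $(1+\sigma,1+\tau)$. The only cosmetic difference is that the paper simply picks the single value $s=\sqrt{L/M}$, which makes both intercepts equal to $r\sqrt{LM}\geq 2+\sigma+\tau$ and renders your quadratic/AM--GM step unnecessary; your uniform bound $\lfloor rL/s\rfloor\lfloor rsM\rfloor\leq r^2LM$ is also slightly off when $\sigma,\tau<0$, but any crude bound on the compact $s$-interval suffices.
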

\begin{proof}
The curve $r\Gamma(s)$ with the particular choice $s=\sqrt{L/M}$ has horizontal and vertical intercepts equal to
$r\sqrt{LM}$. That intercept value is $\geq (2+\sigma+\tau)$, by assumption on $r$ in this lemma. Hence by
concavity, $r\Gamma(s)$ encloses the  point $(1+\sigma,1+\tau)$ and so $N(r,s)>0$ for this particular value of $s$, which means the maximum of $s \mapsto N(r,s)$ is greater than $0$. 

When $s> rL/(1+\sigma)$, the $x$-intercept of $r\Gamma(s)$ is less than $1+\sigma$ and so no shifted lattice points are enclosed, meaning $N(r,s)=0$. Thus the maximum is not attained for such $s$-values. Arguing similarly with the $y$-intercept shows the maximum is also not attained when $s<(1+\tau)/rM$. The lemma follows. 
\end{proof}

The last lemma required only that $\Gamma$ be concave decreasing. Smoothness was not needed. Smoothness is not used in the next proposition either, which gives an upper bound on the counting function and so extends a result from the unshifted case \cite[Proposition~10]{Lau_Liu17}. 
\begin{proposition}[Two-term upper bound on counting function for concave curves]\label{prop:ineq_shift}
Let $\sigma, \tau > -1$. The number $N(r, s)$ of shifted lattice points lying inside $r\Gamma(s)$ satisfies
\begin{equation}\label{eq:two_term_ineq}
N(r,s) \leq r^2 \area(\Gamma)-C_1rs+\sigma^- \tau^-
\end{equation}
for all $r \geq (1-\sigma^-)s/L$ and $s \geq 1$, where 
\begin{equation}
C_1 = C_1(\Gamma, \sigma, \tau) = \frac{1}{2}\Big(M-f(\frac{1-\sigma^-}{2-\sigma^-}L)\Big)-\sigma^-M-\tau^-L.
\end{equation} 
\end{proposition}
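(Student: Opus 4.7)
The plan is to estimate $N(r,s)$ by associating each enclosed shifted lattice point with a unit square, then applying a concavity-based trapezoidal inequality column by column.

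First I would decompose $N(r,s) = \sum_{j=1}^{J^*} K_j$, where $K_j = \max\{0, \lfloor rsf((j+\sigma)s/r) - \tau\rfloor\}$ counts enclosed shifted lattice points in column $j$ and $J^*$ is the largest such $j$. The elementary floor inequality gives $K_j \leq rsf((j+\sigma)s/r) + \tau^-$, so
\[
N(r,s) \leq \sum_{j=1}^{J^*} h(j) + \tau^- J^*, \qquad h(x) := rsf\bigl((x+\sigma)s/r\bigr).
\]
Using $J^* \leq rL/s + \sigma^-$ together with the standing hypothesis $s \geq 1$, the residue $\tau^- J^*$ is bounded by $\tau^- L rs + \sigma^- \tau^-$, which matches the $\tau^-$ piece of the target $-C_1 rs + \sigma^-\tau^-$.

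Next, whenever $\sigma < 0$ I would concavely extend $f$ by the constant value $M$ on $[-\sigma^-, 0]$, so that $h$ becomes concave on $[0,\infty)$ with $h(0) = rsM$. The trapezoidal inequality for concave functions then yields
\[
\sum_{j=1}^{J^*} h(j) \leq \int_0^{J^*} h(x)\ud x - \tfrac12\bigl[h(0) - h(J^*)\bigr].
\]
The substitution $u = (x+\sigma)s/r$ evaluates the integral as $\sigma^- M rs + r^2 \int_0^{(J^*+\sigma)s/r} f(u)\ud u \leq \sigma^- M rs + r^2 \area(\Gamma)$, accounting for the $r^2\area(\Gamma)$ bulk and the $\sigma^- M rs$ correction in the final bound.

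The crux is to show that $h(0) - h(J^*) \geq rs\bigl(M - f(x^*)\bigr)$, where $x^* = \frac{1-\sigma^-}{2-\sigma^-}L$. Because $h(0) = rsM$, monotonicity of $f$ reduces this to verifying $(J^*+\sigma)s/r \geq x^*$. I would argue this by cases: when $r \leq (2-\sigma^-)s/L$, already the first column satisfies $(1+\sigma)s/r \geq (1-\sigma^-)s/r \geq x^*$, so certainly $(J^*+\sigma)s/r \geq x^*$; when $r > (2-\sigma^-)s/L$, the requirement $K_{J^*} \geq 1$ forces $J^*$ to grow so that $J^* + \sigma \geq rx^*/s$, which can be confirmed by substituting into the defining inequality for $K_{J^*}$ and using that $rsf(x^*) \geq 1+\tau$ in this regime. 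Assembling the three pieces,
\[
N(r,s) \leq r^2 \area(\Gamma) - \tfrac{rs}{2}\bigl(M - f(x^*)\bigr) + \sigma^- M rs + \tau^- L rs + \sigma^-\tau^-,
\]
which rearranges into the claim with $C_1 = \tfrac12(M - f(x^*)) - \sigma^- M - \tau^- L$. The most delicate part — and the main obstacle — is the case analysis in the crux step: the specific value $x^* = \frac{1-\sigma^-}{2-\sigma^-}L$ is engineered so that the drop $h(0) - h(J^*)$ can be controlled uniformly across the whole admissible range of $(r,s)$, and the regime where $J^*$ is as small as it can be requires carefully balancing the hypothesis $r \geq (1-\sigma^-)s/L$ against the minimal condition $K_{J^*} \geq 1$.
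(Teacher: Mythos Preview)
Your overall strategy---the trapezoidal inequality for the concave function $h$---is essentially equivalent to the paper's geometric argument with triangles; the correction term $\tfrac12(h(0)-h(J^*))$ is exactly the telescoping triangle area $\tfrac12(M-f(k+\sigma))$ after rescaling. So the architecture is right.

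The genuine gap is in your crux step, specifically case~2. Even granting $rsf(x^*) \geq 1+\tau$ (which you do not justify), this only says $x^* \leq g((1+\tau)/rs)$, and $J^* = \lfloor r g((1+\tau)/rs)/s - \sigma \rfloor$. The floor can land \emph{below} $rx^*/s - \sigma$: for instance if $rg(\cdots)/s - \sigma = 3.7$ and $rx^*/s - \sigma = 3.5$, then $J^* = 3 < 3.5$. So ``$K_{J^*} \geq 1$'' does not force $J^*+\sigma \geq rx^*/s$. The problem is that $J^*$ is tied to where lattice points stop, which is one step short of where the curve meets the relevant height.

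The fix is to abandon $J^*$ and instead sum to $k = \lfloor rL/s - \sigma \rfloor$, the last column before the $x$-intercept (this only enlarges the sum since $h(j)+\tau^- \geq 0$ for $j \leq k$). Then no case analysis is needed: from the defining inequalities $k+\sigma \leq rL/s < k+1+\sigma$ one gets
\[
(k+\sigma)\frac{s}{r} \;>\; \frac{k+\sigma}{\,k+1+\sigma\,}\,L \;\geq\; \frac{1+\sigma}{2+\sigma}\,L \;=\; x^*,
\]
the first inequality because $rL/s < k+1+\sigma$, the second because $t \mapsto t/(t+1)$ is increasing and $k \geq 1$ (guaranteed by the hypothesis $r \geq (1-\sigma^-)s/L$). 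This algebraic bound is the mechanism that produces the specific value $\tfrac{1-\sigma^-}{2-\sigma^-}L$, and it replaces your two-regime argument cleanly. The paper also treats $\sigma,\tau \leq 0$ first and then reduces the remaining sign cases by the monotonicity $N_{\sigma,\tau} \leq N_{0,0}$, which is simpler than carrying $\sigma^-,\tau^-$ through every step.
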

The constant $C_1$ might or might not be positive. Parameter Assumption~\ref{pa:concave} consists of the assumption $C_1>0$ along with the corresponding inequality for $g$, in the situation where $L=M$.
\begin{proof}
First suppose $\sigma\leq 0, \tau \leq 0$. Write $N$ for the number of shifted lattice points under $\Gamma$, and suppose $L\geq 1+\sigma$ so that $\lfloor L-\sigma \rfloor \geq 1$. Extend the curve $\Gamma$ horizontally from $(0, M)$ to $(\sigma, M)$, so that $f(\sigma)=M$. Construct triangles with vertices at $\big(i-1+ \sigma, f(i-1+\sigma)\big), \big(i+\sigma, f(i+\sigma)\big), \big(i-1+\sigma, f(i+\sigma)\big)$ for $i = 1,\dots ,\lfloor L -\sigma \rfloor$, as illustrated in \autoref{fig:area-comp}. The rightmost vertex of the final triangle has horizontal coordinate $\lfloor L-\sigma \rfloor + \sigma$, which is less than or equal to $L$. These triangles lie above the unit squares with upper right vertices at shifted lattice points, and lie below the curve $\Gamma$ due to concavity. 
\begin{figure}[t]
 \includegraphics[scale=0.4]{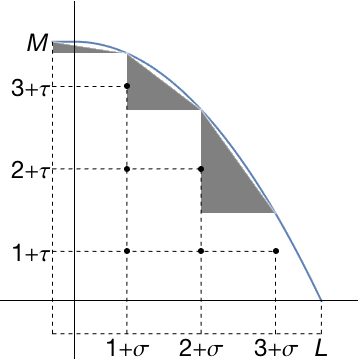}
 \caption{\label{fig:area-comp}Concave curve enclosing lattice points shifted in the negative direction. The square areas represent the lattice point count, while the triangles estimate the discrepancy between that count and the area under the curve, as needed for \autoref{prop:ineq_shift}}. 
\end{figure}
Hence 
\begin{equation}\label{eq:total_area_shift}
N + \area(\text{triangles}) \leq \area(\Gamma)-\sigma(M-\tau)-\tau(L-\sigma) -\sigma\tau,
\end{equation}
where the correction terms on the right side of the inequality represent the areas of the rectangular regions outside the first quadrant. 

Letting $k = \lfloor L-\sigma\rfloor \geq 1$, we compute
\begin{align}
\area(\text{triangles})&=\sum_{i=1}^{k} \frac{1}{2}\big(f(i-1+\sigma) - f(i+\sigma)\big) \nonumber\\
&=\frac{1}{2}\big(M-f(k+\sigma)\big) \notag 
\\
&\geq \frac{1}{2}\big(M- f(\frac{1+\sigma}{2+\sigma}L)\big) \label{eq:area_shift}
\end{align}
because $f$ is decreasing and $k+\sigma \leq L< k+1+\sigma$ implies 
\[
k+\sigma > \frac{k+\sigma}{k+1+\sigma} L \geq \frac{1+\sigma}{2+\sigma} L .
\]
Combining \autoref{eq:total_area_shift} and \autoref{eq:area_shift} proves 
\begin{equation}\label{eq:n_bound}
N \leq \area(\Gamma) - \sigma M -\tau L -\frac{1}{2}\Big(M- f(\frac{1+\sigma}{2+\sigma}L)\Big) +\sigma\tau .
\end{equation}
Now we replace $\Gamma$ with the curve $r\Gamma(s)$, meaning we replace $N, L, M, f(x)$ with $N(r,s), rs^{-1}L, rsM, rsf(sx/r)$ respectively, thereby obtaining the desired estimate \autoref{eq:two_term_ineq} (noting that $L/s \leq Ls$ since $s \geq 1$). The restriction $L\geq 1+\sigma$ becomes $r \geq (1+\sigma)s/L$ under the rescaling, and so we have proved the proposition in the case $\sigma \leq 0, \tau \leq 0$. 

When $\sigma> 0, \tau> 0$, the number of shifted lattice points inside $r\Gamma(s)$ is less than or equal to the number when there is no shift ($\sigma=\tau=0$), simply because the curve is decreasing. Thus this case of the proposition follows from the ``$\sigma,\tau \leq 0$'' case above. 

When $\sigma > 0, \tau \leq 0$, the number of shifted lattice points inside $r\Gamma(s)$ is less than or equal to the number for $\sigma = 0$ with the same $\tau$ value, and so this case of the proposition follows also from the ``$\sigma,\tau \leq 0$'' case above. A similar argument holds when $\sigma \leq 0, \tau > 0$.  
\end{proof}
\begin{corollary}[Improved two-term upper bound on counting function for concave curves]\label{cor:ineq_shift}
Let $\sigma, \tau > -1$. If $s$ is bounded above and bounded below away from $0$, as $r \to \infty$, then the number $N(r, s)$ of shifted lattice points lying inside $r\Gamma(s)$ satisfies
\begin{equation}\label{eq:boundedimprove}
N(r,s) \leq r^2\area(\Gamma)-r\big(s^{-1} \tau L + s(\sigma+1/2)M \big)+ o(r) .
\end{equation}
\end{corollary}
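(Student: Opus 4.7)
The approach is to refine the triangle-and-square geometric construction from the proof of \autoref{prop:ineq_shift}, applied to the rescaled curve and in the asymptotic regime where $s$ is bounded, so that the triangle telescoping captures nearly all of the quantity $\tfrac{1}{2}rsM$ (rather than just the single triangle's worth used in the uniform-in-$r$ estimate of \autoref{prop:ineq_shift}).

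Write $\widetilde{f}(x)=rsf(sx/r)$ for the rescaled curve $r\Gamma(s)$, and when $\sigma<0$ extend $\widetilde{f}$ horizontally by the constant $rsM$ for $x\in[\sigma,0]$; call the result $\widetilde{f}_{\mathrm{ext}}$. To each counted shifted lattice point $(j+\sigma,k+\tau)$ associate the unit square $[j-1+\sigma,j+\sigma]\times[k-1+\tau,k+\tau]$; these squares are pairwise disjoint and sit inside the region
\[
R=\{(x,y):x\geq\sigma,\ y\geq\tau,\ y\leq\widetilde{f}_{\mathrm{ext}}(x)\}.
\]
Set $k_\ast=\lfloor (r/s)\,g(\max(\tau,0)/(rs))-\sigma\rfloor$ and form the triangles $T_i$, $i=1,\ldots,k_\ast$, with vertices $(i-1+\sigma,\widetilde{f}(i-1+\sigma))$, $(i+\sigma,\widetilde{f}(i+\sigma))$, $(i-1+\sigma,\widetilde{f}(i+\sigma))$. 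These triangles lie below the (extended) curve by concavity, lie above the staircase of unit squares, are pairwise disjoint, and fit inside $R$ by the choice of $k_\ast$ (which ensures $\widetilde{f}(i+\sigma)\geq\tau$). Therefore
\[
N(r,s)+\sum_{i=1}^{k_\ast}\area(T_i)\leq\area(R).
\]

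Two asymptotic estimates, valid uniformly for $s$ in the assumed bounded range, now finish the proof. First, the triangles telescope to
\[
\sum_{i=1}^{k_\ast}\area(T_i)=\tfrac{1}{2}\bigl(\widetilde{f}_{\mathrm{ext}}(\sigma)-\widetilde{f}(k_\ast+\sigma)\bigr)=\tfrac{1}{2}rsM-o(r),
\]
using that $\widetilde{f}_{\mathrm{ext}}(\sigma)=rsM+O(1)$ (because concave $f$ with $f(0)=M$ is automatically Lipschitz at $0$), and $\widetilde{f}(k_\ast+\sigma)=rs\,f(L-O(1/r))=rs\cdot o(1)=o(r)$ by continuity of $f$ at $L$. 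Second, integration and the change of variable $u=sx/r$ yield the uniform formula
\[
\area(R)=r^2\area(\Gamma)-\sigma rsM-\tau rL/s+O(1),
\]
across all sign combinations of $(\sigma,\tau)$: for $\sigma>0$ the correction $r^2\int_0^{s\sigma/r}f(u)\,du=Mrs\sigma+O(1)$ by Lipschitz $f$ at $0$; for $\sigma<0$ the horizontal extension contributes exactly $-\sigma rsM$; analogously in the $\tau$-direction, where the hypothesis $s\geq s_1>0$ ensures $\tau/(rs)\to 0$ uniformly so that the Lipschitz property of $g$ at $0$ gives $g(\tau/(rs))=L-O(1/r)$.

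Subtracting the triangle total from $\area(R)$ gives $N(r,s)\leq r^2\area(\Gamma)-\sigma rsM-\tau rL/s-\tfrac{1}{2}rsM+o(r)$, which rearranges to the stated bound. The main obstacle is producing a single formula valid across the four sign cases of $(\sigma,\tau)$ --- each demanding either a curve extension (negative shift) or an empty-strip subtraction (positive shift) --- but the automatic Lipschitz regularity of $f$ and $g$ at the coordinate axes (forced by concavity together with finite intercept values) makes the formula for $\area(R)$ uniform up to $O(1)$ error, at which point the telescoping takes over.
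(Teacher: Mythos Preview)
Your argument is correct, and takes a somewhat different route than the paper's.

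The paper proceeds in two stages. First, for $\sigma,\tau\le 0$, it reruns the proof of \autoref{prop:ineq_shift} with the minimal column-count requirement $L\ge 1+\sigma$ replaced by $L\ge K+\sigma$ for an auxiliary integer $K$; this yields an upper bound whose constant $D_K$ tends to the sharp value $(\sigma+1/2)M$ as $K\to\infty$, and a $\limsup$ argument converts this into the $o(r)$ statement. Second, for the remaining sign cases it does \emph{not} recompute geometrically: instead it explicitly adds the missing $\lceil\sigma\rceil$ columns (or $\lceil\tau\rceil$ rows) of shifted lattice points to reduce to the case $\widetilde\sigma=\sigma-\lceil\sigma\rceil\in(-1,0]$, using continuity of $f$ at $0$ to count those extra columns as $\lceil\sigma\rceil rsM+o(r)$.

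Your approach is more direct and more uniform: you work immediately with the rescaled curve, telescope \emph{all} the triangles up to the $x$-intercept (rather than stopping after one and later letting $K\to\infty$), and absorb the four sign cases into a single formula $\area(R)=r^2\area(\Gamma)-\sigma rsM-\tau rL/s+O(1)$, which you justify via the Lipschitz behavior of $f$ and $g$ at the origin (the content of \autoref{lemma:f-estimate}). This avoids both the auxiliary parameter $K$ and the column-addition reduction. The price is the explicit case-by-case verification of the $\area(R)$ formula, but this is routine once the Lipschitz estimate is in hand. The paper's approach, by contrast, is more modular --- it reuses \autoref{prop:ineq_shift} almost verbatim and isolates the positive-shift reduction as a separate device that reappears in the convex setting (\autoref{cor:ineq_convex_shift}).
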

\begin{proof}
Take $c>1$ and suppose $c^{-1}<s<c$ throughout the rest of the proof. 

Suppose $\sigma,\tau \leq 0$. Let $K \geq 1$. Repeat the proof of \autoref{prop:ineq_shift} except with the initial supposition $L \geq 1+\sigma$ replaced by $L \geq K + \sigma$, and do not assume $s \geq 1$. One finds 
\[
N(r,s) \leq r^2 \area(\Gamma)-D_K rs-\tau L r s^{-1} + \sigma \tau
\]
for all $r \geq (K+\sigma)s/L$, where 
\[
D_K = D_K(\Gamma, \sigma) = \frac{1}{2}\Big(M-f(\frac{K+\sigma}{K+1+\sigma}L)\Big)+\sigma M.
\]
We deduce
\begin{align*}
& \limsup_{r \to \infty} \sup_{s<c} \frac{1}{r}\Big( N(r,s) - r^2\area(\Gamma) + r\big(s(\sigma+1/2)M +s^{-1} \tau L \big) \Big) \\
& \leq \frac{c}{2} f(\frac{K+\sigma}{K+1+\sigma}L) .
\end{align*}
The last expression can be made arbitrarily small by choosing $K$ sufficiently large (recall $f(L)=0$), and so the left side is $\leq 0$. That proves the corollary when $\sigma,\tau \leq 0$. 

Suppose $\sigma >0, \tau\leq 0$. We will relate this case to the previous one. To emphasize the dependence of the counting function on the shift parameters, write $N_{\sigma,\tau}(r,s)$ for the counting function that was previously written $N(r,s)$. Adding  columns of shifted lattice points at $x=\sigma-\lceil \sigma \rceil + 1, \dots, \sigma-1, \sigma$ gives the counting function $N_{\widetilde{\sigma},\tau}(r,s)$ where $\widetilde{\sigma}=\sigma-\lceil \sigma \rceil \in (-1,0]$. This counting function is related to the original one by
\begin{align*}
N_{\widetilde{\sigma},\tau}(r,s)&=N_{\sigma,\tau}(r,s) + \sum_{i=0}^{\lceil \sigma \rceil - 1} \lfloor rsf\big(s(\sigma-i)/r\big)-\tau \rfloor,\\
&=N_{\sigma,\tau}(r,s) + \lceil \sigma \rceil rsM+o(r),
\end{align*}
as $r\to \infty$, since $s$ is bounded above and $f$ is continuous with $f(0)=M$. Since $\widetilde{\sigma},\tau\leq 0$, we may apply \autoref{eq:boundedimprove} with $\sigma$ replaced by $\widetilde{\sigma}$ to obtain
\[
N_{\widetilde{\sigma},\tau}(r,s) \leq r^2\area(\Gamma)-r\big(s^{-1} \tau L + s(\sigma-\lceil \sigma \rceil +1/2)M \big)+ o(r) \quad \text{as $r\to \infty$}.
\]
Combining the above two formulas, we prove the corollary for $\sigma>0, \tau \leq 0$.
 
When $\sigma \leq 0, \tau > 0$, simply add the appropriate rows instead of columns and argue like above using $\lceil \tau \rceil$ instead of $\lceil \sigma \rceil$, and using the boundedness of $s^{-1}$. Similarly, one can treat the case $\sigma>0, \tau > 0$. 
\end{proof}

The next proposition gives an asymptotic approximation to $N(r,s)$, assuming the curve is concave decreasing and has suitably monotonic second derivative.  
\begin{proposition}[Two-term counting estimate for concave curves]\label{th:asy_pos}
Let $\sigma, \tau> -1$ and $0 \leq q < 1$. If Weaker Concave Condition~\ref{cond-weak-concave} holds and $s+s^{-1} = O(r^q)$ then 
\begin{align}\label{eq:asy_pos}
&N(r,s)=r^2\area (\Gamma)-r\big( s^{-1}(\tau + 1/2)L+s(\sigma+1/2)M\big)+ O(r^Q) 
\end{align}
as $r \to \infty$, where
\[
Q = \max\{\tfrac{2}{3}, \tfrac{1}{2}+\tfrac{3}{2}q , 1-2a_1+q,1-2a_2+\tfrac{3}{2}q, 1-2b_1+q, 1-2b_2+\tfrac{3}{2}q \}.
\]
Special cases: (i) If $q=0$ then $Q = 1-2e$ where $e = \min\{ \tfrac{1}{6}, a_1, a_2,b_1, b_2 \}$. \\
(ii) If Concave Condition~\ref{cond:concave} holds then $Q=\max\{\tfrac{2}{3}, \tfrac{1}{2}+\tfrac{3}{2}q \}$. 
\end{proposition}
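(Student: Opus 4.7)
The strategy is to express $N(r,s)$ as a column sum of floor functions, split each floor via $\lfloor t\rfloor = t - \tfrac{1}{2} - \psi(t)$ with $\psi(t)=\{t\}-\tfrac{1}{2}$, and separately estimate the resulting ``deterministic'' Riemann-sum piece and the sawtooth piece. The deterministic piece will furnish the area and the two linear-in-$r$ shift-dependent corrections, while the sawtooth piece will be controlled by van der Corput--Kusmin--Landau lattice-point estimates, localized to the monotonicity subintervals of $f''$ and $g''$. This is structurally parallel to the unshifted proposition in \cite{Lau_Liu17}, with the shifts entering only through the main-term coefficients and a harmless translation in the argument of $\psi$.

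\textbf{Step 1 (decomposition).} With spacing $\Delta x = s/r$ and sample points $x_j=(j+\sigma)\Delta x$ for $j\geq 0$, write
\[
N(r,s) = \sum_j \bigl( rsf(x_j) - \tau - \tfrac{1}{2} \bigr) \; - \; \sum_j \psi\bigl(rsf(x_j)-\tau\bigr),
\]
the sum running over $j$ with $rsf(x_j) - \tau \geq 1$. The column count is $rL/s - \sigma - \tfrac{1}{2} + O(1)$; to handle the boundary column near $x=L$ correctly, switch to a row sum via $g$ there. Separately, the $(\tau+\tfrac{1}{2})$ correction per column immediately produces the $-r(\tau+\tfrac{1}{2})L/s$ contribution in the final formula.

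\textbf{Step 2 (main term).} Split $[0,L]$ into $[0,\delta(r)]$, $[\delta(r), L-\epsilon(r)]$, and $[L-\epsilon(r),L]$. On the middle (bulk) piece compare the shifted Riemann sum $\Delta x\sum_j f(x_j)$ with $\int f$ by a trapezoidal/Euler--Maclaurin estimate; the boundary-value term $\tfrac{1}{2}f(x_0)\approx \tfrac{1}{2}M$, combined with the shift offset $\sigma\Delta x$ in the left endpoint, produces precisely the $-rs(\sigma+\tfrac{1}{2})M$ contribution. On $[0,\delta(r)]$ integral and sum are both $O(r\delta(r)M)$, and on $[L-\epsilon(r),L]$ the symmetric $g$-analysis yields $O(r\epsilon(r)L)$; under \autoref{eq:f_sup_shift}--\autoref{eq:g_sup_shift} these absorb into $O(r^{1-2a_1+q})$ and $O(r^{1-2b_1+q})$. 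Collecting pieces gives
\[
\sum_j(rsf(x_j)-\tau-\tfrac{1}{2}) = r^2\area(\Gamma) - r\bigl(s^{-1}(\tau+\tfrac{1}{2})L + s(\sigma+\tfrac{1}{2})M\bigr) + O(r^Q).
\]

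\textbf{Step 3 (sawtooth estimate).} On each monotonicity subinterval $(\alpha_{i-1},\alpha_i)$ of $f''$, the reparametrized function $t\mapsto rsf(st/r)$ has second derivative $(s^3/r)f''(st/r)$, which is monotonic and comparable to $s^3/r$ times the bounds on $f''$. Apply the classical van der Corput / Kusmin--Landau bound for the sum of $\psi$-values along such a curve: on the bulk piece this gives $O\bigl(((rs)\cdot s^3/r)^{1/3}(r/s)^{2/3}\bigr) = O\bigl(r^{2/3}s^{?}\bigr)$, and combining with $s+s^{-1}=O(r^q)$ produces the exponents $\tfrac{2}{3}$ and $\tfrac{1}{2}+\tfrac{3}{2}q$ in $Q$; on the endpoint cutoff pieces, the lower bound in \autoref{eq:f_sup_shift} yields the exponent $1-2a_2+\tfrac{3}{2}q$. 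Symmetric application via $g$ on the $y$-axis side contributes the exponents $1-2b_1+q$ and $1-2b_2+\tfrac{3}{2}q$. Taking the maximum gives the stated $Q$.

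\textbf{Step 4 (special cases).} For $q=0$, the $q$-dependent exponents $\tfrac{1}{2}+\tfrac{3}{2}q$ and $1-2a_i+jq, 1-2b_i+jq$ reduce to $\tfrac{1}{2}, 1-2a_i, 1-2b_i$, and the overall maximum becomes $1-2e$ for $e=\min\{\tfrac{1}{6},a_1,a_2,b_1,b_2\}$. Under Concave Condition~\ref{cond:concave}, $f''$ and $g''$ are bounded and monotonic on fixed intervals $[0,\alpha]$ and $[0,\beta]$ reaching the axes, so one may take $\delta(r),\epsilon(r)$ constant (equivalently $a_i,b_i$ arbitrarily large), whence $Q = \max\{\tfrac{2}{3},\tfrac{1}{2}+\tfrac{3}{2}q\}$.

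\textbf{Main obstacle.} The principal technical delicacy lies in Step 3: one must track how powers of $s$ (which itself may grow like $r^q$) and the cutoffs $\delta(r),\epsilon(r)$ interact inside the van der Corput inequality, and stitch estimates across the monotonicity partition of $f''$. The precise form of the maximum defining $Q$ is dictated by this bookkeeping. The shift parameters $\sigma,\tau$ are by contrast essentially benign: they appear as a translation of the sawtooth argument and in the integer column-count, thereby producing only the main-term corrections $(\sigma+\tfrac{1}{2})M$ and $(\tau+\tfrac{1}{2})L$, with no change to the error exponents.
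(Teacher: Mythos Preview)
Your approach is sound in outline but organizationally different from the paper's. You propose to redo the van der Corput analysis directly on the shifted sum $\sum_j \psi(rsf((j+\sigma)s/r)-\tau)$, extracting the $(\sigma+\tfrac12)$ and $(\tau+\tfrac12)$ corrections from the Euler--Maclaurin boundary terms of the Riemann sum. The paper instead \emph{translates} the origin to $(1+\sigma,1+\tau)$, restricts $r\Gamma(s)$ to the new first quadrant, and invokes the already-proved unshifted estimate \cite[Proposition~12]{Lau_Liu17} as a black box; the shift corrections then emerge from relating $\area(\widetilde\Gamma)$ to $\area(\Gamma)$ via $F((1+\sigma)s/r)$ and $G((1+\tau)/rs)$, and from adding back the first row and column of lattice points. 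The paper's route is more economical (no new oscillatory estimates), while yours is more self-contained.

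Two places in your sketch need care if you pursue it. First, in Step~2 your claim that the contribution on $[0,\delta(r)]$ is $O(r\delta(r)M)$ is ambiguous: you must bound the \emph{difference} between the scaled Riemann sum and $r^2\int_0^{\delta(r)} f$, not each piece separately, and track how the boundary value $f(0)=M$ combines with the offset $x_1=(1+\sigma)s/r$ to produce exactly $(\sigma+\tfrac12)M$ rather than, say, $(1+\sigma)M$. The paper handles this cleanly via \autoref{lemma:f-estimate} after translation. Second, the displayed order in Step~3, $O(((rs)\cdot s^3/r)^{1/3}(r/s)^{2/3})$, does not reduce to $O(r^{2/3})$; the correct van der Corput integral is $\int |h''|^{1/3}\,dt = r^{2/3}\int|f''|^{1/3}\,dx$ with the $s$-powers cancelling, and the boundary term $|h''|^{-1/2}=r^{1/2}s^{-3/2}|f''|^{-1/2}$ is what produces both the $\tfrac12+\tfrac32 q$ exponent (at interior partition points $\alpha_i$) and the $1-2a_2+\tfrac32 q$ exponent (at the cutoff $\delta(r)$).
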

The numbers $a_1,a_2,b_1,b_2$ come from Weaker Concave Condition~\ref{cond-weak-concave}. That Condition also involves a point $(\alpha,\beta) \in \Gamma$ with $\alpha,\beta>0$, which we use in the following proof. 
\begin{proof}
The idea is to translate and truncate the curve $r\Gamma(s)$ as in \autoref{fig:neg_shift}, in order to reduce to an unshifted lattice problem.  Then we invoke known results from our earlier paper \cite{Lau_Liu17} (which builds on work of Kr\"{a}tzel \cite{kratzel00,kratzel04} and a theorem of van der Corput). 

\smallskip Step 1 --- Translating and truncating.
Notice $rs \to \infty$ and $rs^{-1} \to \infty$ as $r \to \infty$, since $s=O(r^q)$ and $s^{-1}=O(r^q)$ with $q<1$. Thus by taking $r$ large enough, we insure
\[
rs^{-1} g \Big( s^{-1} \frac{1+\tau}{r} \Big) > rs^{-1} \alpha > 1+\sigma , \qquad
rs f \Big( s \frac{1+\sigma}{r} \Big) > rs \beta > 1+\tau .
\]
For all large $r$ one also has $\delta(r)<\alpha$ and $\epsilon(r)<\beta$, by Weaker Concave Condition~\ref{cond-weak-concave}. 

Given a large $r$ satisfying the above conditions, and a corresponding $s>0$, we let
\[
\widetilde{\alpha} = rs^{-1} \alpha - (1+\sigma) , \qquad
\widetilde{\beta} = rs \beta - (1+\tau) ,
\]
and
\[
\widetilde{L} = rs^{-1} g \Big( s^{-1} \frac{1+\tau}{r} \Big) - (1+\sigma) , \qquad
\widetilde{M} = rs f \Big( s \frac{1+\sigma}{r} \Big) - (1+\tau) ,
\]
so that 
\[
0 < \widetilde{\alpha} < \widetilde{L}, \qquad 0 < \widetilde{\beta} < \widetilde{M} .
\]
\begin{figure}
 \includegraphics[scale=0.4]{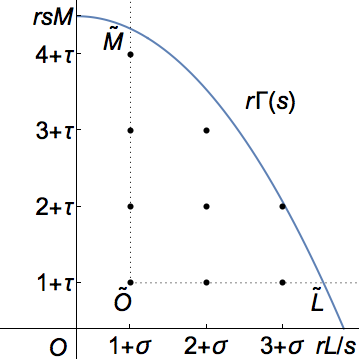}
 \caption{\label{fig:neg_shift}Curve $r\Gamma(s)$ enclosing positive-integer lattice points shifted by $(\sigma,\tau)=(-0.3,-0.4)$. The new origin is $\sO$, and $\sL$ and $\sM$ are the new $x$- and $y$-intercepts, as defined in the proof of \autoref{th:asy_pos}.}
\end{figure}

Consider the point $\widetilde{O}=(1+\sigma,1+\tau)$ in the first quadrant. Regard this point as the new origin, and let $\widetilde{\Gamma}$ be the portion of $r\Gamma(s)$ lying in the new first quadrant --- see \autoref{fig:neg_shift}. That is, $\widetilde{\Gamma}$ is the graph of 
\[
\widetilde{f}(x) = rs f \Big( s \frac{x+1+\sigma}{r} \Big) - (1+\tau) , \qquad 0 \leq x \leq \widetilde{L} ,
\]
and also of its inverse function 
\[
\widetilde{g}(y) = rs^{-1} g \Big( s^{-1} \frac{y+1+\tau}{r} \Big) - (1+\sigma) , \qquad 0 \leq y \leq \widetilde{M} .
\]
Notice $(\widetilde{\alpha},\widetilde{\beta}) \in \widetilde{\Gamma}$, since $f(\widetilde{\alpha})=\widetilde{\beta}$. Write $\widetilde{N}$ for the number of positive-integer lattice points under the curve $\widetilde{\Gamma}$. That is, 
\[
\widetilde{N} = \# \{ (j,k) \in \N \times \N : k \leq \widetilde{f}(j) \} .
\]
This $\widetilde{N}$ does not count the lattice points in the first column or row, which arise from $j=0$ or $k=0$.

Weaker Concave Condition~\ref{cond-weak-concave} guarantees that $\widetilde{f}$ is $C^2$-smooth on the interval $[0,\widetilde{\alpha}]$, with $\widetilde{f}^\prime < 0$ and $\widetilde{f}^{\prime \prime}<0$ there, and similarly $\widetilde{g}$ is $C^2$-smooth on $[0,\widetilde{\beta}]$ with $\widetilde{g}^\prime < 0$ and $\widetilde{g}^{\prime \prime}<0$ there. 

Next, we partition the interval $[0,\widetilde{\alpha}]$ as $0=\widetilde{\alpha}_0 < \widetilde{\alpha}_1 < \dots < \widetilde{\alpha}_{\widetilde{l}} = \widetilde{\alpha}$ where the interior partition points are chosen to be the elements of
\[
\{ rs^{-1} \alpha_i - (1+\sigma) : i=1,\dots,l-1 \} 
\]
that happen to lie between $0$ and $\widetilde{\alpha}$. Observe $\widetilde{f}^{\prime \prime}$ is monotonic on each subinterval of the partition, by Weaker Concave Condition~\ref{cond-weak-concave}. Similarly, $\widetilde{g}^{\prime \prime}$ is monotonic on each subinterval of the corresponding partition $0=\widetilde{\beta}_0 < \widetilde{\beta}_1 < \dots < \widetilde{\beta}_{\widetilde{m}} = \widetilde{\beta}$ of the interval $[0,\widetilde{\beta}]$.

Let 
\[
\widetilde{\delta} = \big[ rs^{-1} \delta(r) - (1+\sigma) \big]^+ , \qquad
\widetilde{\epsilon} = \big[ rs \epsilon(r) - (1+\tau) \big]^+ ,
\]
so that $0 \leq \widetilde{\delta}<\widetilde{\alpha}$ and $0 \leq \widetilde{\epsilon}<\widetilde{\beta}$. 

To relate some of these old and new quantities, we denote antiderivatives of $f, g$ by
\begin{equation}\label{eq:F_def}
F(x) = \int_0^x f(t) \ud t,  \qquad G(y) = \int_0^y g(t) \ud t ,
\end{equation}
and observe that
\begin{align*}
\area(\sGamma) &=r^2\area(\Gamma) - r^2 \big(F((1+\sigma) s/r)+ G((1+\tau) s^{-1}/r)\big)+ (1+\sigma)(1+ \tau),\\
\shiftf '(x)&= s^2f'\big(s\frac{x+1+\sigma}{r}\big),\qquad
\shiftf ''(x)= \frac{s^3}{r}f''\big(s\frac{x+1+\sigma}{r}\big),\\
\int_0^{\sa} | \shiftf ''(x)|^{1/3} \ud x &= r^{2/3} \int_{(1+\sigma)s/r}^{\alpha} | f ''(x)|^{1/3} \ud x \leq r^{2/3} \int_0^{\alpha} | f
''(x)|^{1/3} \ud x,\\
\sum_{i=1}^{\widetilde{l}}\frac{1}{|\shiftf''(\sa_i)|^{1/2}} & \leq\sum_{i=1}^l\frac{r^{1/2}s^{-3/2}}{|f''(\alpha_i)|^{1/2}},
\end{align*}
and similarly for $\sg$ except with $s$ replaced by $s^{-1}$.

\smallskip Step 2 --- Estimating the counting function.
Applying part (a) of \cite[Proposition~12]{Lau_Liu17} to the curve $\sGamma$ and using the preceding relationships, we get
\begin{align}
&\big|\sN-r^2\area (\Gamma)+ r^2 \big(F((1+\sigma) s/r)+ G((1+\tau) s^{-1}/r)\big) \nonumber\\
&\hspace{3cm}+ \frac{r}{2}\big(s f((1+\sigma)s/r) + s^{-1}g((1+\tau)s^{-1}/r) \big) \big|\nonumber\\
&\leq 6r^{2/3}\Big(\int_0^\alpha |f''(x)|^{1/3} \ud x+\int_0^\beta |g''(y)|^{1/3}\ud y\Big)  +175r^{1/2}\big(\frac{s^{-3/2}}{|f''(\delta(r))|^{1/2}}+\frac{s^{3/2}}{|g''(\epsilon(r))|^{1/2}}\big)\nonumber\\
& \hspace{.5cm}+525r^{1/2}\big(\sum_{i=1}^l\frac{s^{-3/2}}{|f''(\alpha_i)|^{1/2}}+\sum_{j=1}^m\frac{s^{3/2}}{|g''(\beta_j)|^{1/2}}\big) +\frac{1}{4}(\sum_{i=1}^l s^2|f'(\alpha_i)|+\sum_{j=1}^m s^{-2}|g'(\beta_j)|)\nonumber\\
&\hspace{.5cm}+\frac{r}{2}(s^{-1}\delta(r)+s\epsilon(r))+l+m
+ \frac{1}{2}(1+\sigma)+\frac{1}{2}(1+\tau)+(1+\sigma)(1+\tau)+1 , \label{eq:two-term-asym-exact-concave}
\end{align}
where we dealt with the term involving $|\shiftf''(\widetilde{\delta})|^{-1/2}$ in \cite[Proposition~12]{Lau_Liu17} as follows. One has $\shiftf''(\widetilde{\delta})=r^{-1}s^3 f''(z)$ where $z = r^{-1}s(\widetilde{\delta}+1+\sigma) \geq \delta(r)$, and so by monotonicity of $f''$ on each subinterval of the partition (as assumed in Weaker Concave Condition~\ref{cond-weak-concave}) one concludes
\[
|\shiftf''(\widetilde{\delta})|\geq r^{-1}s^3 \min \{ |f''\big(\delta(r)\big)|,  |f''\big(\alpha_1\big)| , \dots,  |f''\big(\alpha_l \big)| \}.
\]
Thus the term involving $|\shiftf''(\widetilde{\delta})|^{-1/2}$ can be estimated by the sum of terms involving $|f''\big(\delta(r)\big)|^{-1/2}$ and $|f''\big(\alpha_i\big)|^{-1/2}$. 

The right side of \autoref{eq:two-term-asym-exact-concave} already has the desired order $O(r^Q)$, by direct estimation and using that $s+s^{-1}=O(r^q)$ and $2q < \tfrac{1}{2}+\tfrac{3}{2}q$ since $q<1$. 

\smallskip Step 3 --- Understanding the left side of inequality \autoref{eq:two-term-asym-exact-concave}. 
It remains to deal with the terms on the left of \autoref{eq:two-term-asym-exact-concave}. Clearly $N(r,s)$ and $\sN$ count the same lattice points, except that $N(r,s)$ also counts the points in the first row and column. That is, 
\begin{align*}
\sN 
& = N(r,s) - \lfloor rsf\big((1+\sigma)s/r\big)-\tau \rfloor - \lfloor rs^{-1} g\big((1+\tau)s^{-1}/r\big)-\sigma\rfloor +1 \\
& = N(r,s) -  rsf\big((1+\sigma)s/r\big) -\tau - rs^{-1} g\big((1+\tau)s^{-1}/r\big)-\sigma + \rho(r,s) 
\end{align*}
for some number $\rho(r,s) \in [1,3]$. Substitute this formula into the left side of \autoref{eq:two-term-asym-exact-concave}. Substitute also the following expressions, which are obtained from \autoref{lemma:f-estimate}: 
\begin{align*}
rsf((1+\sigma)s /r) &=rsM +O(s^2),\\
r^2F((1+\sigma) s/r) &=rs (1+\sigma) M +O(s^2),
\end{align*}
and similarly for $g$ and $G$.
The proposition now follows straightforwardly, since $O(s^2)=O(r^{2q})$. 
\end{proof}

\begin{lemma}\label{lemma:f-estimate}
If $f$ is decreasing and concave on $[0,L]$ then 
\[
f(x)=f(0) +O(x), \qquad F(x)=f(0)x + O(x^2), \qquad \text{as $x \to 0$,}
\]
where $F(x) = \int_0^x f(t) \ud t$ is the antiderivative of $f(x)$. 
\end{lemma}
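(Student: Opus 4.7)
The plan is to exploit concavity to get a linear upper bound on the decrement $f(0)-f(x)$ near $x=0$, then integrate to get the quadratic bound on $F$.

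First I would establish the pointwise bound $f(0)-f(x) \le Cx$ for $x \in [0,L]$, with $C = (f(0)-f(L))/L \ge 0$. The key observation is that for a concave function on $[0,L]$, the chord slope from $0$ to $x$ is a nonincreasing function of $x$: that is, for $0 < x \le L$,
\[
\frac{f(x)-f(0)}{x} \ge \frac{f(L)-f(0)}{L}.
\]
Since $f$ is decreasing, $f(x)-f(0) \le 0$, and rearranging gives $0 \le f(0)-f(x) \le Cx$. This proves $f(x) = f(0) + O(x)$ as $x \to 0$, and in fact the bound holds uniformly on $[0,L]$.

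Next I would integrate. Writing
\[
F(x) - f(0)x = \int_0^x \bigl( f(t) - f(0) \bigr) \, dt,
\]
and applying the pointwise bound $|f(t)-f(0)| \le Ct$ just obtained, I get
\[
|F(x) - f(0)x| \le \int_0^x Ct \, dt = \tfrac{1}{2} C x^2 = O(x^2),
\]
which is the second claim.

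There is no serious obstacle here; the only point requiring care is the chord-slope monotonicity for concave functions, which is standard (it follows from the definition of concavity applied to the triple $0 < x < L$, or equivalently from the fact that the one-sided derivatives of a concave function are monotonic). Everything else is elementary integration. The lemma only requires concavity and monotonicity of $f$ on $[0,L]$; no smoothness is needed.
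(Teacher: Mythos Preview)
Your proof is correct and follows essentially the same approach as the paper: both use the monotonicity of the difference quotient $(f(x)-f(0))/x$ for concave $f$, combined with the sign information from $f$ being decreasing, to bound $f(0)-f(x)$ linearly, and then integrate. Your version is simply more explicit about the constant and the integration step.
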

\begin{proof}
The difference quotient $(f(x)-f(0))/x$ is a decreasing function of $x$ since $f$ is concave, and it is less than or equal to $0$ since $f$ is decreasing. Hence the difference quotient is bounded, and so $f(x)=f(0) +O(x)$. Integrating completes the proof. 
\end{proof}

\section{\bf Convex curves --- counting function estimates} \label{section_convex}
Assume the curve $\Gamma$ is convex decreasing, throughout this section. We will prove estimates for convex curves analogous to the work in \autoref{section_concave} for concave curves. 

\autoref{lemma:r_bound_negative} below is an improved  $r$-dependent bound on the optimal stretch factors, generalizing Ariturk and Laugesen's lemma from the unshifted situation \cite[Lemma~7.2]{AL17}. By ``improved'' we refer to the upper and lower bounds: for instance, when $\sigma =0$ the upper bound in \autoref{lemma:r_bound_negative} improves on the bound in \autoref{lemma:bound_shift} by a factor of $2$. This tighter bound on the optimal stretch factor gives us more flexibility when deriving the two-term counting estimate in \autoref{prop:ineq_convex_shift}. 

In the next lemma we assume for simplicity that the $x$- and $y$-intercepts are both $L$, so that we need not change the definitions of $\mu_f(\sigma)$ and $\mu_g(\tau)$ in \autoref{section:results}.
\begin{lemma}[Improved linear-in-$r$ bound on optimal stretch factors for convex curves]\label{lemma:r_bound_negative}
If $\sigma, \tau > -1$ with $\mu_f(\sigma)> 0 $ and $\mu_g(\tau) > 0$, then
\[
S(r)\subset \Big[ \frac{2+\tau}{rL} , \frac{rL}{2+\sigma} \Big]
\]
whenever 
\begin{equation} \label{eq:improvedcondition}
r \geq \max\Big((2+\sigma)\sqrt{2(1+\tau)/L\mu_f(\sigma)}, (2+\tau)\sqrt{2(1+\sigma)/L\mu_g(\tau)}\Big) .
\end{equation}
\end{lemma}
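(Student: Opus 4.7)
The strategy mirrors the proof of \autoref{lemma:bound_shift}, but exploits the quantity $\mu_f(\sigma)>0$ to squeeze down the upper bound on $s$ from $rL/(1+\sigma)$ to $rL/(2+\sigma)$, and symmetrically with $\mu_g(\tau)$ for the lower bound. I will focus on the upper bound; the lower bound follows by an entirely parallel argument with $f,\sigma$ replaced by $g,\tau$ and $s$ by $s^{-1}$ (using $N(r,s)$ written in the ``row'' form via the inverse function $g$).

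\textbf{Upper bound.} First note that $N(r,s)=0$ whenever $s>rL/(1+\sigma)$ (intercept argument from \autoref{lemma:bound_shift}), while under \autoref{eq:improvedcondition} one checks easily that some fixed $s$ gives $N(r,s)>0$. So $S(r)\subset (0,rL/(1+\sigma)]$. Suppose now that $rL/(2+\sigma)<s\le rL/(1+\sigma)$. The $x$-intercept $rL/s$ then lies in $[1+\sigma,2+\sigma)$, so only lattice column $j=1$ can contribute:
\[
N(r,s)\le \lfloor rsf\bigl((1+\sigma)s/r\bigr)-\tau\rfloor \le rsf(x)-\tau, \qquad x:=(1+\sigma)s/r\in\Bigl[\frac{(1+\sigma)L}{2+\sigma},L\Bigr].
\]
Consider the competitor $s':=s(1+\sigma)/(2+\sigma)<s$. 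Then $s'\le rL/(2+\sigma)$ (using $s\le rL/(1+\sigma)$), so the $x$-intercept of $r\Gamma(s')$ is at least $2+\sigma$ and columns $j=1,2$ both contribute. A direct substitution shows
\[
(1+\sigma)s'/r=\tfrac{1+\sigma}{2+\sigma}x,\qquad (2+\sigma)s'/r=x.
\]
Using the elementary bound $\#\{k\in\N:k+\tau\le h\}\ge h-\tau-1$ (valid for all $h\in\R$), the contributions of the two columns at $s'$ give
\[
N(r,s')\ \ge\ rs'f\!\Bigl(\tfrac{1+\sigma}{2+\sigma}x\Bigr)+rs'f(x)-2\tau-2\ =\ \frac{rs(1+\sigma)}{2+\sigma}\Bigl[f\!\Bigl(\tfrac{1+\sigma}{2+\sigma}x\Bigr)+f(x)\Bigr]-2\tau-2.
\]
Since $x\in[(1+\sigma)L/(2+\sigma),L]$, the defining inequality for $\mu_f(\sigma)$ yields $(1+\sigma)f\!\bigl(\tfrac{1+\sigma}{2+\sigma}x\bigr)\ge f(x)+\mu_f(\sigma)$, so the bracket above is bounded below by $(2+\sigma)f(x)/(1+\sigma)+\mu_f(\sigma)/(1+\sigma)$. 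Substituting,
\[
N(r,s')\ \ge\ rsf(x)+\frac{rs\,\mu_f(\sigma)}{2+\sigma}-2\tau-2.
\]
Subtracting the upper bound on $N(r,s)$ yields
\[
N(r,s')-N(r,s)\ \ge\ \frac{rs\,\mu_f(\sigma)}{2+\sigma}-(\tau+2).
\]
Since $s>rL/(2+\sigma)$, this exceeds $\frac{r^2 L\mu_f(\sigma)}{(2+\sigma)^2}-(\tau+2)$, which is positive precisely when $r^2L\mu_f(\sigma)>(\tau+2)(2+\sigma)^2$; the hypothesis \autoref{eq:improvedcondition} (via $2(1+\tau)\ge \tau+2$ when $\tau\ge 0$, and otherwise a minor sharpening of the floor-function estimates, keeping track of the sign of $\tau$ so that $k=1$ already contributes when $\tau\in(-1,0)$) secures this. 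Hence $s\notin S(r)$, proving $S(r)\subset (0,rL/(2+\sigma)]$.

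\textbf{Lower bound and obstacle.} The bound $s\ge (2+\tau)/(rL)$ follows identically: if $s<(2+\tau)/(rL)$ then only row $k=1$ contributes, and comparing with $s'':=s(2+\tau)/(1+\tau)$ via the inverse description $j+\sigma\le rs^{-1}g((k+\tau)s^{-1}/r)$ and the bound $\mu_g(\tau)>0$ produces a strictly larger count under the second threshold in \autoref{eq:improvedcondition}. The main obstacle is the careful bookkeeping of floor-function errors so that the resulting constant $2(1+\tau)$ (not something like $\tau+2$) suffices uniformly in the sign of $\tau\in(-1,\infty)$; this is where one must notice that if $\tau<0$ then even a single enclosed lattice point in column $j=1$ at height $k=1$ requires $rsf(x)-\tau\ge 1$, and this built-in slack is exactly what tightens the threshold.
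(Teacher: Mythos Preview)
Your approach is essentially identical to the paper's: the same competitor $s'=\frac{1+\sigma}{2+\sigma}\,s$, the same observation that only column $j=1$ contributes at $s$ while columns $j=1,2$ contribute at $s'$, and the same use of $\mu_f(\sigma)$ to produce the gain term $\frac{rs\,\mu_f(\sigma)}{2+\sigma}$. The paper organizes the final comparison slightly more cleanly: rather than subtracting the bound $N(r,s)\le rsf(x)-\tau$ (which leaves you with the awkward threshold $\tau+2$), it writes $N(r,s)\le rsf(x)$ and shows directly that $N(r,s')>rsf(x)$, so the needed inequality is exactly $\frac{rs\,\mu_f(\sigma)}{2+\sigma}>2(1+\tau)$, matching \autoref{eq:improvedcondition} on the nose. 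With that regrouping your case split on the sign of $\tau$ and the vague ``minor sharpening'' become unnecessary, at least for $\tau\ge 0$; for $\tau\in(-1,0)$ the bound $N(r,s)\le rsf(x)$ can fail by less than $1$, so both your write-up and the paper's require a small additional remark there --- you flag this explicitly, the paper glosses over it.
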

\begin{proof}\ 

Claim~1: $N(r,s)=0$ if $s \in \big( 0,(1+\tau)/rL \big]$ or $s \in \big[ rL/(1+\sigma),\infty \big)$. Indeed, the curve $r\Gamma(s)$ has $x$- and $y$-intercepts at $rL/s$ and $rsL$, respectively, and so if $rL/s \leq 1+\sigma$ or $rsL \leq 1+\tau$ then the point $(1+\sigma,1+\tau)$ is not enclosed by the curve and so the lattice count $N(r,s)$ is zero. 

Claim~2: if \autoref{eq:improvedcondition} holds and $s\in \big( rL/(2+\sigma),rL/(1+\sigma) \big)$ then 
\[
N(r,s) < N\Big(r,\frac{1+\sigma}{2+\sigma} s\Big) .
\]
To prove this claim, notice the $x$-intercept satisfies 
\[
1+\sigma < \frac{rL}{s} < 2+\sigma ,
\]
and so only the first column of shifted lattice points (the points with $x$-coordinate at $1+\sigma$) can contribute to the count inside $r\Gamma(s)$. Hence $N(r,s)= \lfloor rsf((1+\sigma)s/r) - \tau\rfloor$. Meanwhile, if we count shifted lattice points in the first two columns (where $x=1+\sigma$ and $x=2+\sigma$) we find 
\begin{align}
& N\Big(r,\frac{1+\sigma}{2+\sigma} s\Big) \\
&\geq \big\lfloor rs\frac{1+\sigma}{2+\sigma}f\Big( \frac{(1+\sigma)^2s}{(2+\sigma )r}\Big)- \tau \big\rfloor + \big\lfloor rs\frac{1+\sigma}{2+\sigma}f\Big( \frac{(1+\sigma)s}{r}\Big) - \tau \big\rfloor \nonumber \\
&> rs\frac{1+\sigma}{2+\sigma}f\Big( \frac{(1+\sigma)^2s}{(2+\sigma )r}\Big) + rs\frac{1+\sigma}{2+\sigma}f\Big( \frac{(1+\sigma)s}{r}\Big) -2\tau -2 \nonumber \\
&= rsf\Big(\frac{(1+\sigma)s}{r}\Big) + \frac{rs}{2+\sigma}\Big( (1+\sigma)f\Big( \frac{(1+\sigma)^2s}{(2+\sigma )r}\Big) -f\Big( \frac{(1+\sigma)s}{r}\Big) \Big) -2(1+\tau)  \nonumber\\
&\geq rsf\Big(\frac{(1+\sigma)s}{r}\Big) + \frac{rs}{2+\sigma}\mu_f(\sigma) -2(1+\tau) \notag \\
&>  rsf\Big(\frac{(1+\sigma)s}{r}\Big) \geq N(r,s) , \notag
\end{align}
where to get the final line we use that $\frac{rs}{2+\sigma}\mu_f(\sigma)>2(1+\tau)$, which follows from $s>rL/(2+\sigma)$ and the lower bound on $r$ in \autoref{eq:improvedcondition}. The proof of Claim~2 is complete. 

Claim~3: if \autoref{eq:improvedcondition} holds and $s \in \big( (1+\tau)/rL , (2+\tau)/rL \big)$ then 
\[
N(r,s) < N\Big(r,\frac{2+\tau}{1+\tau} s \Big) .
\]
The proof is analogous to Claim~2, except counting in rows instead of columns.

Claim~4: if \autoref{eq:improvedcondition} holds then the maximizing $s$-values for $N(r,s)$ lie in the interval $\big[ (2+\tau)/rL, rL/(2+\sigma) \big]$. To see this, note that $N(r,s^\prime)>0$ for some $s^\prime>0$, by the strict inequality in Claim~2, and so the maximum does not occur in the intervals considered in Claim~1. The maximum does not occur in the interval considered in Claim~2, as that claim itself shows, and similarly for Claim~3. Thus the maximum must occur in the remaining interval, which proves Claim~4 and thus finishes the proof of the lemma. 
\end{proof}

The next bound generalizes work of Ariturk and Laugesen \cite[Proposition~5.1]{AL17} from the unshifted situation ($\sigma=\tau=0$) to the shifted case.
\begin{proposition}[Two-term upper bound on counting function for convex curves]\label{prop:ineq_convex_shift}
Let $\sigma,\tau> -1$. The number $N(r, s)$ of shifted lattice points lying inside $r\Gamma(s)$ satisfies
\begin{equation}\label{eq:two_term_ineq_convex}
N(r,s) \leq r^2 \area(\Gamma)-C_2rs + \sigma^- \tau^-
\end{equation}
for all $r \geq (2-\sigma^-)s/L$ and $s\geq 1$, where 
\[
C_2 = C_2(\Gamma, \sigma, \tau) = \frac{1}{2}(1-\sigma^-)f(\frac{1-\sigma^-}{2-\sigma^-}L)-\sigma^-M-\tau^-L .
\]
\end{proposition}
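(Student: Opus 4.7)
The plan is to follow the template of \autoref{prop:ineq_shift} (the concave analogue) while generalizing the unshifted convex estimate of \cite[Proposition~5.1]{AL17}. I would execute three steps: a monotonicity-based reduction to $\sigma,\tau\le 0$, an unscaled-inequality argument in this regime, and a rescaling.

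For the reduction, observe that enlarging $\sigma$ or $\tau$ shifts the lattice rightward or upward, which can only decrease $N(r,s)$ since $\Gamma$ is decreasing. Thus it suffices to prove the proposition when $\sigma,\tau\in(-1,0]$: replacing a positive shift by zero in $C_2$ eliminates the corresponding correction term, and the resulting bound applied to the reduced (zero-shift) count dominates the original count. In the regime $\sigma,\tau\in(-1,0]$, assume $L\ge 2+\sigma$ and set $k=\lfloor L-\sigma\rfloor\ge 2$. Starting from $N\le\sum_{i=1}^{k}(f(i+\sigma)-\tau)$, invoke the midpoint inequality for convex $f$ (extended by $f\equiv 0$ past $x=L$, which preserves convexity because $f(L)=0$ and $f'(L^-)\le 0$) to obtain $\sum_{i=1}^{k}f(i+\sigma)\le\int_{\sigma+1/2}^{k+\sigma+1/2}f(x)\,dx$; for $\sigma+1/2<0$ I would extend $f$ linearly past $x=0$ to preserve convexity. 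Subtracting from $\area(\Gamma)=\int_0^L f$ and attributing the rectangular correction $-\sigma M-\tau L+\sigma\tau$ to the areas outside the first quadrant created by the shift (exactly as in the concave proof), the remaining head and tail integrals should be lower-bounded by $\tfrac{1}{2}(1+\sigma)f(\tfrac{1+\sigma}{2+\sigma}L)$. The heuristic behind the coefficient is a right triangle of base $1+\sigma$ (the horizontal distance from the $y$-axis to the first column of shifted lattice points) and height $f(1+\sigma)$; the bound $1+\sigma\le\tfrac{1+\sigma}{2+\sigma}L$, equivalent to $L\ge 2+\sigma$, then combines with the decreasing property to convert $f(1+\sigma)$ into $f(\tfrac{1+\sigma}{2+\sigma}L)$. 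This yields the target unscaled inequality
\[
N \le \area(\Gamma) - \tfrac{1}{2}(1+\sigma)\,f\!\left(\tfrac{1+\sigma}{2+\sigma}L\right) - \sigma M - \tau L + \sigma\tau .
\]

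For the rescaling, substitute $L\mapsto rL/s$, $M\mapsto rsM$, $f(x)\mapsto rsf(sx/r)$ to get $N(r,s)\le r^2\area(\Gamma)-\tfrac{1}{2}(1+\sigma)rs\,f(\tfrac{1+\sigma}{2+\sigma}L)-\sigma rsM-\tau rL/s+\sigma\tau$. Since $s\ge 1$ and $\tau\le 0$, one has $-\tau rL/s\le -\tau rsL$, which upgrades the estimate to the claimed form $N(r,s)\le r^2\area(\Gamma)-C_2 rs+\sigma^-\tau^-$; the hypothesis $L\ge 2+\sigma$ transforms into $r\ge(2-\sigma^-)s/L$.

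\textbf{The main obstacle} is the geometric/integral lower bound in the unscaled step: because the curve is convex, chords lie above the curve, reversing the direction of the discrepancy triangles used in the concave proof. One therefore cannot simply slot triangles between successive lattice columns; instead the triangle area $\tfrac{1}{2}(1+\sigma)f(1+\sigma)$ must be extracted from the midpoint-integral estimate, with careful bookkeeping of the first-column and first-row unit squares that are truncated by the coordinate axes when $\sigma<0$ or $\tau<0$. A further subtlety is ensuring that the extension of $f$ past the endpoints of $[0,L]$ preserves convexity so that the midpoint inequality remains applicable throughout; the subrange $\sigma\in(-1,-1/2)$ may require separate handling to accommodate the possibly negative lower endpoint of integration.
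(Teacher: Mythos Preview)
Your route diverges from the paper's at the central estimate. The paper does \emph{not} use the midpoint inequality. Instead, for $\sigma,\tau\le 0$ it extends $f$ horizontally to $[\sigma,0]$ by $f\equiv M$ and inserts, above each column of unit squares, a region bounded above by the \emph{tangent line} to the convex curve: a trapezoid over the first column $[\sigma,1+\sigma]$ with slanted side along the tangent at $(1+\sigma,f(1+\sigma))$, and for $i\ge 2$ a triangle of base $1$ and height $-f'(i+\sigma)$. These fit below $\Gamma$ because tangent lines to a convex graph lie underneath it, and their total area telescopes via $-f'(i+\sigma)\ge f(i+\sigma)-f(i+1+\sigma)$ to give $\tfrac12(1+\sigma)f(1+\sigma)-\tfrac12\sigma f(2+\sigma)\ge \tfrac12(1+\sigma)f\bigl(\tfrac{1+\sigma}{2+\sigma}L\bigr)$. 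The rectangular corrections $-\sigma M-\tau L+\sigma\tau$ then arise exactly as in the concave case, from comparing areas over the enlarged box $[\sigma,L]\times[\tau,M]$. So the geometric picture you borrowed from \autoref{prop:ineq_shift} is in fact the one the paper keeps; what changes is that for convex curves the discrepancy regions are tangent-line triangles rather than chord triangles.

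Your midpoint-inequality alternative is a legitimate idea and can be pushed through for $\sigma\in[-\tfrac12,0]$: from $\sum_{i}f(i+\sigma)\le\int_{\sigma+1/2}^{k+\sigma+1/2}f$ and $-k\tau\le -\tau L+\sigma\tau$ one needs $\int_0^{\sigma+1/2}f\ge\tfrac12(1+\sigma)f\bigl(\tfrac{1+\sigma}{2+\sigma}L\bigr)+\sigma M$, and since $\int_0^{\sigma+1/2}f\ge(\sigma+\tfrac12)f\bigl(\tfrac{1+\sigma}{2+\sigma}L\bigr)$ (monotonicity plus $\sigma+\tfrac12\le\tfrac{1+\sigma}{2+\sigma}L$) this reduces to $|\sigma|M\ge\tfrac{|\sigma|}{2}f(\cdot)$, which holds. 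Note, however, that in this framework the terms $-\sigma M$ and $-\tau L+\sigma\tau$ do \emph{not} come from ``areas outside the first quadrant''; they emerge from bounding the head integral and from $-k\tau\le -(L-\sigma)\tau$. Your description conflates the two bookkeeping schemes.

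The genuine gap is the subrange $\sigma\in(-1,-\tfrac12)$. Your linear extension of $f$ past $x=0$ requires $f'(0^+)$ to be finite, which fails for the very curves the paper cares about (e.g.\ the $p$-circle with $p<1$ has $f'(0^+)=-\infty$). Even when $f'(0^+)$ is finite, the midpoint bound picks up an extra $\tfrac12|f'(0^+)|(\sigma+\tfrac12)^2$ from the extended portion, and the target inequality then demands $|f'(0^+)|(\sigma+\tfrac12)^2\lesssim M$, which is false for steep convex $f$ (take $f(x)=M(1-x/L)^n$ with $n$ large). The paper's construction sidesteps this completely by using tangent lines only at the interior abscissas $1+\sigma,2+\sigma,\dots$, never at $0$; no extension of $f$ is needed and no control on $f'(0^+)$ is assumed.
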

The constant $C_2$ need not be positive. That is why hypothesis \autoref{eq:convex_sigma} in Parameter Assumption~\ref{pa:convex} includes (for $L=M$) the assertion that $C_2>0$.  
\begin{figure}[t]
\includegraphics[scale=.4]{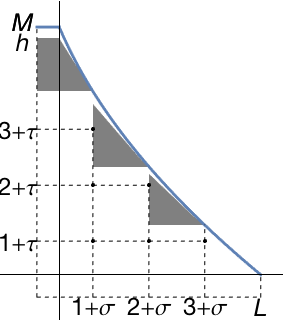}
\caption{\label{fig:triangleconvex}Convex curve enclosing lattice points shifted in the negative direction. The square areas represent the lattice point count, while the triangles and trapezoid estimate the discrepancy between that count and the area under the curve in \autoref{prop:ineq_convex_shift} }
\end{figure}
\begin{proof}
First consider $\sigma\leq0, \tau \leq0$. Write $N$ for the number of shifted lattice points under $\Gamma$. Suppose $L\geq 2+\sigma$. Extend the curve horizontally from $(0, M)$ to $(\sigma, M)$, so that $f(\sigma)=M$. Construct a trapezoid (see \autoref{fig:triangleconvex}) with vertices at $\big( \sigma,f(1+\sigma) \big)$, $\big( 1+\sigma, f(1+\sigma) \big)$, $(0,h)$, $(\sigma,h)$ where $h=f(1+\sigma)-(1+\sigma) f'(1+\sigma)$. Also construct triangles with vertices $\big(i-1+\sigma, f(i+\sigma)\big)$, $\big(i+\sigma, f(i+\sigma)\big)$, $\big(i-1+ \sigma, f(i+\sigma)-f'(i+\sigma)\big)$, where $i = 2,\dots ,\lfloor L -\sigma \rfloor$. These triangles lie above the squares with upper right vertices at the shifted lattice points, and like  below the curve by convexity, as \autoref{fig:triangleconvex} illustrates. Hence 
\begin{equation}\label{eq:total_area_convex}
N + \area(\text{trapezoid and triangles}) \leq \area(\Gamma)-\sigma(M-\tau)-\tau(L-\sigma) -\sigma\tau
\end{equation}

Let $k = \lfloor L-\sigma\rfloor \geq 2$, so that $k+\sigma \leq L < k+\sigma+1$. Then
\begin{align*}
\area(\text{trapezoid})
&= \frac{1}{2}(\text{base}+\text{top}) \cdot (\text{height}) \\
& = - \frac{1}{2} (1-\sigma) \cdot (1+\sigma) f'(1+\sigma) \\
& \geq \frac{1}{2} (1+\sigma) \big( f(1+\sigma) - f(2+\sigma) \big)
\end{align*}
by convexity, and using that $1-\sigma \geq 1$. Further, convexity implies
\begin{align}
\area(\text{triangles})&= - \frac{1}{2} \sum_{i=2}^{k}  f'(i+\sigma) \nonumber\\
& \geq \frac{1}{2}  \sum_{i=2}^{k-1} \big(f(i+\sigma)-f(i+1+\sigma) \big) + \frac{1}{2}\big(f(k+\sigma)-f(L) \big)\nonumber\\
&=\frac{1}{2}f(2+\sigma) .
\end{align}
Hence
\begin{align}
& \area(\text{trapezoid}) + \area(\text{triangles}) \\
& \geq \frac{1}{2}(1+\sigma)f(1+\sigma) - \frac{1}{2} \sigma f(2+\sigma) \notag \\
& \geq \frac{1}{2}(1+\sigma)f(\frac{1+\sigma}{2+\sigma}L) - \frac{1}{2} \sigma f(\frac{2+\sigma}{2+\sigma}L) \label{eq:triangle_area_estimate_convex}
\end{align}
since $f$ is decreasing and $L/(2+\sigma) \geq 1$. Combining \autoref{eq:total_area_convex} and \autoref{eq:triangle_area_estimate_convex} and using $f(L)=0$ proves 
\begin{equation*}\label{eq:n_bound_convex}
N \leq \area(\Gamma) - \sigma M -\tau L -\frac{1}{2}(1+\sigma)f\big(\frac{1+\sigma}{2+\sigma}L\big) +\sigma\tau .
\end{equation*}

Now replace $\Gamma$ with the curve $r\Gamma(s)$, meaning replace $N, L, M, f(x)$ with $N(r,s)$, $rs^{-1}L$, $rsM$, $rsf(sx/r)$ respectively. Using $s \geq 1$, we know $L/s \leq Ls$; the assumption $L \geq 2+\sigma$ becomes $r \geq (2+\sigma)s/L$. Thus we obtain \autoref{eq:two_term_ineq_convex} in the case $\sigma \leq 0, \tau \leq 0$. 

One may now deduce the remaining cases as was done in the proof of \autoref{prop:ineq_shift}. 
\end{proof}
\begin{corollary}[Improved two-term upper bound on counting function for convex curves]\label{cor:ineq_convex_shift}
Let $\sigma, \tau > -1$. If $s$ is bounded above and bounded below away from $0$, as $r \to \infty$, then the number $N(r, s)$ of shifted lattice points lying inside $r\Gamma(s)$ satisfies
\begin{equation}\label{eq:boundedimprove_convex}
N(r,s) \leq r^2\area(\Gamma)-r\big(s^{-1} \tau L + s(\sigma+1/2)M \big)+ o(r) .
\end{equation}
\end{corollary}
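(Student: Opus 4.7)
The plan is to follow the strategy of \autoref{cor:ineq_shift}, substituting the convex counting ingredients from \autoref{prop:ineq_convex_shift}. First I would handle the base case $\sigma,\tau\leq 0$; the remaining sign cases then follow from the same column/row padding trick used at the end of \autoref{cor:ineq_shift}.

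For $\sigma,\tau\leq 0$, fix $c>1$ and restrict attention to $s\in(c^{-1},c)$. Rather than invoking \autoref{prop:ineq_convex_shift} in its packaged form (where the trapezoid-plus-triangles area is further bounded by values of $f$ at arguments close to $L$), I would retain the intermediate estimate that actually appears inside its proof. Combined with \autoref{eq:total_area_convex}, that intermediate estimate gives
\[
N \leq \area(\Gamma) - \sigma M - \tau L - \tfrac{1}{2}(1+\sigma)f(1+\sigma) + \tfrac{1}{2}\sigma f(2+\sigma) + \sigma\tau,
\]
valid whenever $L\geq 2+\sigma$. Applying this to the rescaled curve $r\Gamma(s)$---so $f(x)\mapsto rsf(sx/r)$, $L\mapsto rL/s$, and $M\mapsto rsM$, with the hypothesis $L\geq 2+\sigma$ satisfied for all large $r$---and using that $f$ is continuous at $0$ with $f(0)=M$, one obtains $rsf\bigl((1+\sigma)s/r\bigr)=rsM+o(r)$ and $rsf\bigl((2+\sigma)s/r\bigr)=rsM+o(r)$ uniformly in $s$. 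Collecting terms yields the desired
\[
N(r,s) \leq r^2\area(\Gamma) - rs\bigl(\sigma+\tfrac{1}{2}\bigr)M - \tau rs^{-1}L + o(r).
\]

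For the remaining sign cases I would imitate the reduction from the end of the proof of \autoref{cor:ineq_shift}. When $\sigma>0$ and $\tau\leq 0$, set $\widetilde{\sigma}=\sigma-\lceil\sigma\rceil\in(-1,0]$; adding the $\lceil\sigma\rceil$ extra columns of shifted lattice points at $x=\sigma,\sigma-1,\ldots,\sigma-\lceil\sigma\rceil+1$ gives
\[
N_{\widetilde{\sigma},\tau}(r,s)=N_{\sigma,\tau}(r,s)+\lceil\sigma\rceil\, rsM+o(r),
\]
again by continuity of $f$ at $0$ together with boundedness of $s$. Combining this with the case already proved (for $\widetilde{\sigma},\tau\leq 0$) and using $\widetilde{\sigma}+\lceil\sigma\rceil=\sigma$ yields the corollary for $N_{\sigma,\tau}$; the cases $\sigma\leq 0,\tau>0$ and $\sigma>0,\tau>0$ are handled symmetrically, using rows in place of (or in addition to) columns.

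The main issue is more bookkeeping than obstruction: one must verify that every $o(\cdot)$ quantity is uniform in $s\in(c^{-1},c)$, so that multiplication by $rs$ or $rs^{-1}$ produces a genuine $o(r)$ contribution. This follows from uniform continuity of $f$ on the closed interval $[0,L]$ together with the two-sided bound on $s$. In contrast to \autoref{cor:ineq_shift}, no auxiliary parameter $K$ needs to be sent to infinity here, because the rescaled arguments $(1+\sigma)s/r$ and $(2+\sigma)s/r$ automatically tend to $0$ as $r\to\infty$.
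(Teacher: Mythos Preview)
Your argument is correct. The core ingredients---the intermediate trapezoid-plus-triangle bound extracted from inside the proof of \autoref{prop:ineq_convex_shift}, the rescaling, and the column/row padding reduction for positive shifts---are exactly what the paper uses.

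The one genuine difference is organizational: the paper keeps the structure of \autoref{cor:ineq_shift} intact and introduces an auxiliary integer $K\geq 2$, rerunning the proof of \autoref{prop:ineq_convex_shift} under the stronger hypothesis $L\geq K+\sigma$ to obtain
\[
E_K = \tfrac{1}{2}(1+\sigma)f\big(\tfrac{1+\sigma}{K+\sigma}L\big) - \tfrac{1}{2}\sigma f\big(\tfrac{2+\sigma}{K+\sigma}L\big) + \sigma M,
\]
and then sends $K\to\infty$ after taking $\limsup_{r\to\infty}$. Your route bypasses the $K$-device by retaining the raw values $f(1+\sigma),f(2+\sigma)$ before the scale-invariant rewriting, so that after rescaling the arguments become $(1+\sigma)s/r$ and $(2+\sigma)s/r$, which tend to $0$ directly. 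Both approaches ultimately appeal to continuity of $f$ at $0$; yours just does so in a single limit rather than a double one. Your uniformity check (that the $o(1)$ is uniform over $s\in(c^{-1},c)$, so multiplication by $rs$ yields $o(r)$) is the right thing to verify, and it goes through as you say.
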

\begin{proof}
Fix $c>1$ and assume $c^{-1}<s<c$ in the rest of the proof. 

Suppose $\sigma,\tau \leq 0$, and let $K \geq 2$. Repeat the proof of \autoref{prop:ineq_convex_shift} except with the initial requirement $L \geq 2+\sigma$ replaced by $L \geq K + \sigma$, and do not assume $s \geq 1$. The argument gives 
\[
N(r,s) \leq r^2 \area(\Gamma) -E_K rs - \tau L rs^{-1} +\sigma\tau .
\]
for all $r \geq (K+\sigma)s/L$, where 
\[
E_K = E_K(\Gamma, \sigma) = \frac{1}{2}(1+\sigma)f\big(\frac{1+\sigma}{K+\sigma}L\big) - \frac{1}{2} \sigma f(\frac{2+\sigma}{K+\sigma}L) + \sigma M .
\]
Hence
\begin{align*}
& \limsup_{r \to \infty} \sup_{s<c} \frac{1}{r}\Big( N(r,s) - r^2\area(\Gamma) + r\big(s(\sigma+1/2)M +s^{-1} \tau L \big) \Big) \\
& \leq \frac{c}{2} \Big| M - (1+\sigma)f\big(\frac{1+\sigma}{K+\sigma}L\big) + \sigma f(\frac{2+\sigma}{K+\sigma}L) \Big| .
\end{align*}
The last expression can be made arbitrarily small by choosing $K$ sufficiently large (recall $f(0)=M$), and so the left side is $\leq 0$, which proves the corollary when $\sigma,\tau \leq 0$. 

By arguing as in the proof of \autoref{cor:ineq_shift}, one handles the other three cases for $\sigma$ and $\tau$.

\end{proof}

In the next proposition we state a two-term asymptotic for lattice point counting under convex curves.
\begin{proposition}[Two-term counting estimate for convex curves]\label{th:asy_pos_convex}
Let $\sigma, \tau> -1$. If Weaker Convex Condition~\ref{cond-weak-convex} holds and $s+s^{-1} = O(1)$ then 
\begin{equation} \label{eq:asy_pos_convex}
N(r,s)=r^2\area (\Gamma)-r\big( s^{-1}(\tau + 1/2)L+s(\sigma+1/2)M\big)+ O(r^{1-2\se}) 
\end{equation}
as $r \to \infty$, where $\se = \min\{ \tfrac{1}{6}, a_1, a_2, a_3,b_1, b_2, b_3 \}$. In particular, if Convex Condition~\ref{cond:convex} holds  then \autoref{eq:asy_pos_convex} holds with $\se=\tfrac{1}{6}$. 
\end{proposition}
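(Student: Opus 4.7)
The plan is to mirror the proof of \autoref{th:asy_pos}, translating and truncating the scaled shifted count so as to reduce it to an unshifted lattice problem under a convex curve, then applying the convex lattice-counting machinery from Ariturk--Laugesen \cite{AL17} in place of the concave van der Corput--type estimate from \cite{Lau_Liu17}. Concretely, I place the new origin at $\widetilde{O}=(1+\sigma,1+\tau)$, let $\widetilde{\Gamma}$ denote the portion of $r\Gamma(s)$ in the translated first quadrant, and define
\[
\widetilde{f}(x) = rsf\!\left(\tfrac{s(x+1+\sigma)}{r}\right)-(1+\tau), \qquad \widetilde{g}(y)=rs^{-1}g\!\left(\tfrac{s^{-1}(y+1+\tau)}{r}\right)-(1+\sigma),
\]
together with $\widetilde{L},\widetilde{M},\widetilde{\alpha},\widetilde{\beta},\widetilde{\delta},\widetilde{\epsilon}$ exactly as in the proof of \autoref{th:asy_pos}, adapted for convexity (so partition points near the \emph{new} intercepts are translates of those near $x=L$ and $y=L$, not near $x=0$ and $y=0$). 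Weaker Convex Condition~\ref{cond-weak-convex} transfers to $\widetilde{\Gamma}$: it is convex and $C^2$-smooth on $[\widetilde{\alpha},\widetilde{L})$, with $\widetilde{f}''$ monotonic on each subinterval of the translated partition, and similarly for $\widetilde{g}$.

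Applying the convex two-term asymptotic from \cite{AL17} to $\widetilde{\Gamma}$ yields an inequality analogous to \autoref{eq:two-term-asym-exact-concave}, with error terms built from $r^{2/3}\!\int|\widetilde{f}''|^{1/3}$, from $r^{1/2}|\widetilde{f}''(\widetilde{L}-\widetilde{\delta})|^{-1/2}$, from endpoint slope sums, and from their $\widetilde{g}$ counterparts. The scaling identities $\widetilde{f}''(x)=r^{-1}s^{3}f''(s(x+1+\sigma)/r)$ and its $\widetilde{g}$ analogue, combined with $s+s^{-1}=O(1)$ and the bounds \autoref{eq:f_sup_convex} and \autoref{eq:g_sup_convex}, give errors of order $O(r^{2/3})+O(r^{1-2a_1})+O(r^{1-4a_2})+O(r^{1-2b_1})+O(r^{1-4b_2})$. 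As in the concave proof, the potentially delicate term $|\widetilde{f}''(\widetilde{L}-\widetilde{\delta})|^{-1/2}$ is dominated, via monotonicity of $\widetilde{f}''$ on its subinterval, by the sum of $|f''(L-\delta(r))|^{-1/2}$ and values at the neighboring partition endpoints.

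The essentially new ingredient for the convex case is control of $\Gamma$ near its original intercepts $(L,0)$ and $(0,L)$. The assumptions $f(x)=L+O(x^{2a_3})$ and $g(y)=L+O(y^{2b_3})$ as $x,y\to 0$ yield
\[
\widetilde{M}=rsL-(1+\tau)+O(r^{1-2a_3}), \qquad \widetilde{L}=rs^{-1}L-(1+\sigma)+O(r^{1-2b_3}),
\]
together with ``missing strip'' area bounds $\int_{0}^{(1+\sigma)s/r}(L-f(t))\,dt=O(r^{-1-2a_3})$ and $\int_{0}^{(1+\tau)s^{-1}/r}(L-g(t))\,dt=O(r^{-1-2b_3})$; after multiplication by $r^2$ these contribute at most $O(r^{1-2a_3})+O(r^{1-2b_3})$ when expanding $\area(\widetilde{\Gamma})$ in terms of $r^2\area(\Gamma)$. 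Combining these expansions with the identity $\widetilde{N}=N(r,s)-\lfloor\widetilde{M}\rfloor-\lfloor\widetilde{L}\rfloor+\rho(r,s)$ for some $\rho(r,s)\in[1,3]$, the boundary terms collapse into the stated coefficient $s^{-1}(\tau+1/2)L+s(\sigma+1/2)M$, with total error $O(r^{1-2\se})$. The special case $\se=1/6$ under Convex Condition~\ref{cond:convex} follows because the derivation immediately after that condition gives $a_i,b_i\geq 1/4$.

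The main obstacle is the bookkeeping for how the translated cutoff $\widetilde{\delta}$ sits relative to the translated partition: one must verify that the monotonicity-on-subintervals hypothesis still bounds $|\widetilde{f}''(\widetilde{L}-\widetilde{\delta})|^{-1/2}$ by the required power of $r$, exactly as in the concave proof following \autoref{eq:two-term-asym-exact-concave} but with the relevant inequalities flipped for convexity.
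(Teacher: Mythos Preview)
Your approach is essentially the same as the paper's: translate the origin to $(1+\sigma,1+\tau)$, apply the convex counting estimate from \cite[Proposition~6.1]{AL17} to $\widetilde{\Gamma}$, and unwind using the $a_3,b_3$ intercept conditions in place of \autoref{lemma:f-estimate}. Two small corrections are worth noting. First, your stated error orders $O(r^{1-4a_2})$ and $O(r^{1-4b_2})$ should be $O(r^{1-2a_2})$ and $O(r^{1-2b_2})$: the relevant term is $r^{1/2}\,|f''(L-\delta(r))|^{-1/2}=r^{1/2}\,O(r^{(1-4a_2)/2})=O(r^{1-2a_2})$. Second, the paper first assumes that $r\Gamma(s)$ passes through no shifted lattice point (as required by \cite[Proposition~6.1]{AL17}) and then removes this assumption in a short final step by monotonicity of $N$ in $r$ and a limiting argument; you should include this detail.
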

\autoref{th:asy_pos_convex} does not assume the intercepts $L$ and $M$ are equal, and so we modify Weaker Convex Condition~\ref{cond-weak-convex} by taking each occurrence of ``$L$'' that relates to the function $g$ and changing it to ``$M$'', and changing the $a_3$-condition to $f(x) = M+O(x^{2a_3})$.
\begin{proof}
We use the idea from \autoref{th:asy_pos}: translate and truncate the curve $r\Gamma(s)$ to reduce to an unshifted lattice problem, and then use results from Ariturk and Laugesen's paper \cite{AL17}. 

Assume $r\Gamma(s)$ does not pass through any point in the shifted lattice. This assumption will be removed in the final step of the proof. 

\smallskip Step 1 --- Translating and truncating.
Keep the notation from the proof of \autoref{th:asy_pos}, except redefine the quantities $\widetilde{\delta}$ and $\widetilde{\epsilon}$ to be
\[
\widetilde{\delta} = \big[ \widetilde{L} + 1+\sigma - rs^{-1} (L - \delta(r))  \big]^+ , \qquad \widetilde{\epsilon} = \big[ \widetilde{M} + 1+\tau - rs (M - \epsilon(r))  \big]^+ .
\]
Arguing as in Step~1 of that proof, we have
\[
0 < \widetilde{\alpha} < \lfloor \widetilde{L} \rfloor, \qquad 0 < \widetilde{\beta} < \lfloor\widetilde{M}\rfloor ,
\]
by taking $r$ large enough, and also
\[
0 \leq \widetilde{\delta}< \lfloor \widetilde{L} \rfloor-\widetilde{\alpha},\qquad 0 \leq \widetilde{\epsilon}< \lfloor \widetilde{M} \rfloor-\widetilde{\beta} .
\]

\smallskip
Step~2 --- Estimating the counting function.
Recall $F$ represents the antiderivative of $f$, defined in \autoref{eq:F_def}. Applying part (a) of \cite[Proposition~6.1]{AL17} to the curve $\sGamma$ and using the relationships between the unshifted and shifted quantities as in the proof of \autoref{th:asy_pos}, we get

\begin{align}\label{eq:two-term-asym-exact-convex}
&\Big|\sN-r^2\area (\Gamma) + r^2\Big(F\big((1+\sigma)s/r\big) + G\big((1+\tau)s^{-1}/r\big)\Big)\nonumber\\
&\hspace{1cm}+ \frac{r}{2}\Big(sf\big((1+\sigma)s/r\big) +s^{-1}g\big((1+\tau)s^{-1}/r)\big)\Big)\Big|\nonumber\\
&\leq 6r^{2/3}\Big(\int_\alpha^{L} f''(x)^{1/3} \ud x+\int_\beta^{M} g''(y)^{1/3}\ud y\Big)
+175r^{1/2}\Big(\frac{s^{-3/2}}{f''\big(L- \delta(r)\big)^{1/2}}\nonumber\\
&\hspace{1cm}+\frac{
s^{3/2}}{g''\big(M-\epsilon(r)\big)^{1/2}}\Big)+700r^{1/2}\Big(\sum_{i=0}^{l-1}\frac{s^{-3/2}}{f''(\alpha_i)^{1/2}}+\sum_{j=0}^{m-1}\frac{
s^{3/2}}{g''(\beta_j)^{1/2}}\Big)\nonumber\\
& \hspace{1cm}+\frac{1}{4} \Big(
\sum_{i=0}^{l-1}s^2|f'(\alpha_i)|+\sum_{j=0}^{m-1}s^{-2}|g'(\beta_j)| \Big) +\frac{1}{2}r\big(s^{-1}\delta(r)+s\epsilon(r)\big)+
l+m\nonumber\\
& \hspace{1cm} +\frac{1}{2}(1+\sigma)+\frac{1}{2}(1+\tau)+(1+\sigma)(1+\tau)+5
 \nonumber \\
&\hspace{1cm}+\frac{rs^{-1}g((1+\tau)/rs)-(1+\sigma)}{rsf((1+\sigma)s/r)-(1+\tau)}+\frac{rsf((1+\sigma)s/r)-(1+\tau)}{rs^{-1}g((1+\tau)/rs)-(1+\sigma)},
\end{align}
where we estimated the term involving $\shiftf''(\sL-\widetilde{\delta})^{-1/2}$ as follows. One has $\shiftf''(\sL-\widetilde{\delta}) =r^{-1}s^3 f''(z)$ where 
\[
z=r^{-1}s (\widetilde{L}-\widetilde{\delta}+1+\sigma) \leq L-\delta(r) ,
\]
and so by monotonicity of $f''$ on each subinterval of the partition (as assumed in Weaker Convex Condition \ref{cond-weak-convex}) one concludes
\[
\shiftf''(\sL-\widetilde{\delta})\geq r^{-1}s^3 \min \{ f''\big(L-\delta(r)\big),  f''\big(\alpha_0\big),\dots,f''\big(\alpha_{l-1}\big)\}.
\]
Thus the term involving $\shiftf''(\sL-\widetilde{\delta})^{-1/2}$ can be estimated by the sum of terms involving $f''(L-\delta(r))^{-1/2}$ and $f''(\alpha_i)^{-1/2}$.

The right side of \autoref{eq:two-term-asym-exact-convex} has the form $O(r^{1-2e})$, by arguing directly with $s+s^{-1}=O(1)$ and the assumptions in Weaker Convex Condition~\ref{cond-weak-convex}, and estimating the last two terms in \autoref{eq:two-term-asym-exact-convex} by
\[
\frac{rs^{-1}g((1+\tau)/rs)-(1+\sigma)}{rsf((1+\sigma)s/r)-(1+\tau)}=\frac{s^{-1}L-o(1)}{sM-o(1)} = O(1) 
\]
and similarly with $f$ and $g$ interchanged. 

\smallskip
Step~3 --- Understanding the left side of inequality \autoref{eq:two-term-asym-exact-convex}. The terms on the left of \autoref{eq:two-term-asym-exact-convex} are dealt with in the same manner as in Step~3 of \autoref{th:asy_pos}, except replacing \autoref{lemma:f-estimate} with the last assumption in Weaker Convex Condition~\ref{cond-weak-convex}, as follows. Substituting $x=(1+\sigma)s/r$ into $f(x)=M+O(x^{2a_3})$ and into $F(x)=Mx+O(x^{1+2a_3})$ gives
\begin{align*}
rsf((1+\sigma)s /r) &=rsM +O(r^{1-2a_3}),\\
r^2F((1+\sigma) s/r) &=rs (1+\sigma) M +O(r^{1-2a_3}),
\end{align*}
since $s+s^{-1}=O(1)$. One argues similarly for $g$ and $G$. Thus we have finished the proof under the assumption that $r\Gamma(s)$ passes through no lattice points. 

\smallskip
Step~4 --- Finishing the proof. Now drop the assumption that $r\Gamma(s)$ passes through no lattice points. Notice the counting function $N(r,s)$ is increasing in the $r$-variable. Fix the $r$ and $s$ values, and modify the functions $\delta(\cdot)$ and $\epsilon(\cdot)$ to be continuous at $r$. For sufficiently small $\eta>0$ we have $N(r+\eta,s)=N(r,s)$, because the $r$-variable would have to increase by some positive amount for the curve $r\Gamma(s)$ to reach any new lattice points. Since no lattice points lie on the curve $(r + \eta)\Gamma(s)$, Steps~1--3 above apply to that curve. Hence by continuity as $\eta \to 0$, the conclusion of the proposition holds also for $r\Gamma(s)$.
\end{proof}

\section{\bf Lower bound on the counting function for decreasing $\Gamma$}\label{section_lower_bound}
We need a rough lower bound on the counting function, in order to prove boundedness of the maximizing set in \autoref{thm:s_bounded_shift}. Assume the curve $\Gamma$ is strictly decreasing in the first quadrant, and has $x$- and $y$-intercepts at $L$ and $M$. The intercepts need not be equal, in the next lemma. 

\begin{lemma}[Rough lower bound for decreasing curve]\label{lemma:lower_bound}
The number $N(r, s)$ of shifted lattice points lying inside $r\Gamma(s)$ satisfies
\begin{equation}\label{eq:s_bounded_n_lower_bound} 
N(r,s)\geq r^2\area(\Gamma)-r\big(s^{-1}(1+\tau)L+s(1+\sigma)M\big), \qquad r,s>0.
\end{equation}
\end{lemma}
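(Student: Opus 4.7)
The plan is a direct area-comparison argument. To each shifted lattice point $(j+\sigma,k+\tau)$ counted by $N(r,s)$, I attach the closed unit square
\[
R_{j,k}=[j+\sigma,\,j+\sigma+1]\times[k+\tau,\,k+\tau+1]
\]
whose lower-left corner is that lattice point. Writing $T$ for the set of pairs $(j,k)\in\N\times\N$ enumerated by $N(r,s)$, I will show that the union $\bigcup_{(j,k)\in T} R_{j,k}$ covers the region
\[
\Omega=\{(x,y):1+\sigma\le x,\ 1+\tau\le y\le rsf(sx/r)\}
\]
lying under the scaled curve $r\Gamma(s)$. Since each $R_{j,k}$ has area $1$, this yields $N(r,s)=\#T\ge\area(\Omega)$.

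The covering step uses only that $f$ is decreasing. Given $(x,y)\in\Omega$, put $j=\lfloor x-\sigma\rfloor$ and $k=\lfloor y-\tau\rfloor$; the inequalities $x\ge 1+\sigma$ and $y\ge 1+\tau$ force $j,k\ge 1$, and by construction $(x,y)\in R_{j,k}$. Since $j+\sigma\le x$, monotonicity of $f$ gives
\[
k+\tau\le y\le rsf(sx/r)\le rsf\bigl(s(j+\sigma)/r\bigr),
\]
so $(j,k)\in T$ as required.

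For the area, let $C=rs^{-1}g((1+\tau)/rs)$ be the $x$-coordinate where $r\Gamma(s)$ crosses the horizontal line $y=1+\tau$. When $C\ge 1+\sigma$,
\[
\area(\Omega)=\int_{1+\sigma}^{C}\bigl(rsf(sx/r)-(1+\tau)\bigr)\,dx,
\]
and I split $r^2\area(\Gamma)=\int_0^{rL/s} rsf(sx/r)\,dx$ into integrals over $[0,1+\sigma]$, $[1+\sigma,C]$, and $[C,rL/s]$. Using the crude bounds $rsf(sx/r)\le rsM$ on the first interval and $rsf(sx/r)\le 1+\tau$ on the third (the latter because $f$ is decreasing and $rsf(sC/r)=1+\tau$ by definition of $C$), and then collecting terms, I obtain
\[
\area(\Omega)\ge r^2\area(\Gamma)-(1+\sigma)rsM-(1+\tau)rs^{-1}L+(1+\sigma)(1+\tau),
\]
which implies the stated lower bound since $(1+\sigma)(1+\tau)\ge 0$. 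In the remaining case $C<1+\sigma$, no $j\ge 1$ satisfies $j+\sigma\le C$, so $N(r,s)=0$; a parallel splitting of $r^2\int_0^L f(t)\,dt$ at $t=(1+\sigma)s/r$---using $f\le M$ on the left piece and $f\le(1+\tau)/(rs)$ on the right piece---shows $r^2\area(\Gamma)\le(1+\sigma)rsM+(1+\tau)rs^{-1}L$, so the right-hand side of \autoref{eq:s_bounded_n_lower_bound} is nonpositive and the inequality is trivial. The whole argument is bookkeeping rather than analysis; the ``rough'' epithet in the statement is merited by the appearance of $(1+\sigma)$ and $(1+\tau)$ instead of the sharper $(\sigma+1/2)$ and $(\tau+1/2)$ that entered the two-term estimates of the previous sections.
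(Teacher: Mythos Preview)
Your proof is correct and follows essentially the same approach as the paper: attach a unit square with lower-left corner at each counted lattice point, observe these squares cover the portion of the region under $r\Gamma(s)$ lying in $\{x\ge 1+\sigma,\ y\ge 1+\tau\}$, and bound the uncovered strip by two rectangles of total area $(1+\sigma)rsM+(1+\tau)rs^{-1}L$. The only differences are presentational: the paper first proves the inequality for $\Gamma$ itself (splitting into the same two cases according to whether $(1+\sigma,1+\tau)$ lies under the curve) and then rescales, and it phrases the area comparison geometrically via covering rectangles rather than by splitting an integral, but the content is identical.
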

\begin{proof}
We split the proof into two cases, and later rescale to handle the general curve. Write $N$ for the number of shifted lattice points under $\Gamma$. 

Case I: The point $(1+\sigma,1+\tau)$ lies outside the curve $\Gamma$, and so $N=0$. Then the rectangles with vertices $(0,0),(L,0),(L,1+\tau),(0,1+\tau)$ and $(0,0),(1+\sigma,0),(1+\sigma,M),(0,M)$ cover $\Gamma$ since the curve is decreasing, and so by comparing areas one has 
\begin{equation} \label{eq:roughlower}
N +(1+\tau)L + (1+\sigma)M\geq \area(\Gamma).
\end{equation}

Case II: The point $(1+\sigma,1+\tau)$ lies inside the curve. We shift the origin to $\sO=(1+\sigma, 1+\tau)$ and draw new axes, denoting the $x$- and $y$-intercepts on the new axes by $\sL$ and $\sM$; see \autoref{fig:s_bound_pos_shift}. The part of $\Gamma$ lying in the new first quadrant is $\sGamma$.
\begin{figure}
\includegraphics[scale=0.4]{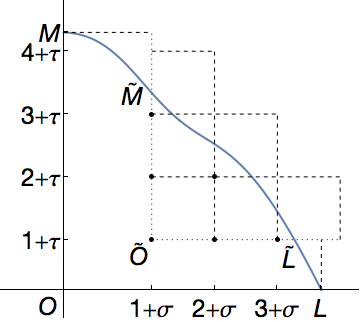}
\caption{\label{fig:s_bound_pos_shift} Decreasing curve $\Gamma$ enclosing positive integer lattice points shifted by amount $(\sigma,\tau)=(0.4, -0.2)$. We shift the origin by $1+\sigma, 1+\tau$, obtaining a new origin $\sO$, with $\sL$ and $\sM$ being the new $x$- and $y$-intercepts. The lattice point count equals the area of the squares, as used in proving \autoref{lemma:lower_bound}.}
\end{figure}
Each lattice point corresponds to a square whose lower left vertex sits at that point. These squares cover $\sGamma$ since the curve is strictly decreasing. The remaining area under $\Gamma$ is covered by the two rectangles described in Case~I. The sum of the areas of the squares and rectangles must exceed the area under $\Gamma$, and so \autoref{eq:roughlower} holds once again. 

To complete the proof, simply replace the curve $\Gamma$ with $r\Gamma(s)$, meaning that in \autoref{eq:roughlower} we replace $N$, $L$, $M$ with $N(r,s)$, $rs^{-1}L$, $rsM$ respectively. The lemma follows.
\end{proof}

\section{\bf Proof of \autoref{thm:s_bounded_shift}}\label{section_proof_bounded}
We prove the theorem in two parts: first for concave curves, and then for convex curves. When $\Gamma$ is concave, we will utilize the bound on $S(r)$ in \autoref{lemma:bound_shift} and the two-term upper bound on the counting function in \autoref{prop:ineq_shift}, along with the improved upper bound in \autoref{cor:ineq_shift} and the rough lower bound on the counting function in \autoref{lemma:lower_bound}. 

Recall the intercepts are assumed equal ($L=M$) in this theorem. 

\subsection*{Part 1: $\Gamma$ is concave and Parameter Assumption~\autoref{pa:concave} holds}\ 
The proof has two steps. Step~1 shows $S(r)$ is bounded above and below away from $0$, for large $r$. Step~2 uses this boundedness to improve the asymptotic bound on $S(r)$, revealing that it depends only on $\sigma$ and $\tau$ and not the curve $\Gamma$. 

\smallskip
\noindent Step~1. Take $s\in S(r)$ and suppose $r \geq (2+\sigma+\tau)/L$. Then \autoref{lemma:bound_shift} says $s\leq rL/(1+\sigma)$, so that 
\[
r \geq \frac{(1+\sigma)s}{L} \geq \frac{(1-\sigma^-)s}{L}.
\]
If $s\geq 1$ then \autoref{prop:ineq_shift} implies
\begin{equation*}\label{eq:s_bounded_upper_bound}
N(r,s)\leq r^2 \area(\Gamma)-C_1rs + \sigma^- \tau^- .
\end{equation*}
Parameter Assumption~\autoref{pa:concave} guarantees here that $C_1>0$.

The lower bound in \autoref{lemma:lower_bound} with ``$s = 1$'' says
\begin{equation}\label{eq:s-1-lower-bound}
N(r,1) \geq r^2 \area(\Gamma)-(2+\sigma+\tau)Lr.
\end{equation}
Since $s\in S(r)$ is a maximizing value, one has $N(r,s) \geq N(r,1)$, and so the preceding two inequalities give 
\[
s \leq \frac{(2+\sigma+\tau)L}{C_1} + \frac{\sigma^- \tau^- L}{(2+\sigma+\tau)C_1}
\]
when $r \geq (2+\sigma+\tau)/L$ and $s \geq 1$. Thus $S(r)$ is bounded above for all large $r$. 

Similarly if $s\in S(r)$ then $s^{-1}$ is bounded above, by interchanging the roles of the horizontal and vertical axes in the argument above. Thus the set $S(r)$ is bounded below away from $0$, for large $r$.

\smallskip
Step~2. The number 
\[
\overline{s} = \limsup_{s \in S(r), r \to \infty} s
\]
is finite and positive by Step~1. Combining the inequality $N(r,s) \geq N(r,1)$ with estimate \autoref{eq:s-1-lower-bound} and \autoref{cor:ineq_shift} (which relies on the boundedness of $S(r)$) we obtain  
\[
(\sigma+1/2) \overline{s}^2 - (2+\sigma+\tau) \overline{s} + \tau \leq 0 
\]
after letting $r \to \infty$. Notice $\sigma+1/2>0$ by hypothesis in \autoref{thm:s_bounded_shift}. Hence $\overline{s}$ is bounded above by the larger root of the quadratic; that is,
\[
\overline{s} \leq B(\sigma,\tau)= \frac{2+\sigma+\tau + \sqrt{ (2+\sigma + \tau)^2 - 4(\sigma+1/2)\tau}}{2(\sigma+1/2)} .
\]
Similarly $\limsup_{r\to \infty} s^{-1} \leq B(\tau,\sigma)$, by interchanging the roles of the axes. The proof of \autoref{thm:s_bounded_shift} is complete, in the concave case. 
 
\subsection*{Part 2: $\Gamma$ is convex and Parameter Assumption~\autoref{pa:convex} holds}\ 

Take $s \in S(r)$ and suppose $r$ satisfies \eqref{eq:improvedcondition}, recalling there that $\mu_f(\sigma)$ and $\mu_g(\tau)$ are positive by Parameter Assumption~\autoref{pa:convex}. Now proceed as in Part~1 of the proof, simply replacing \autoref{lemma:bound_shift}, \autoref{prop:ineq_shift} and \autoref{cor:ineq_shift} with \autoref{lemma:r_bound_negative}, \autoref{prop:ineq_convex_shift} and \autoref{cor:ineq_convex_shift}, respectively.

\section{\bf Proof of \autoref{th:S_limit_shift_general}} \label{section_proof}

Recall the intercepts are equal, $L=M$, in this theorem. 

The optimal stretch parameters are bounded above and bounded below away from $0$ as $r \to \infty$, by \autoref{thm:s_bounded_shift}. (It suffices to use the curve-dependent bound from Step~1 of that proof; we do not need the curve-independent bound $B(\sigma,\tau)$ from Step~2.) 

Hence by \autoref{th:asy_pos} (if $\Gamma$ is concave) or \autoref{th:asy_pos_convex} (if $\Gamma$ is convex), 
\begin{align}
N(r,s) =r^2\area (\Gamma)-rL\big(s^{-1}(\tau + 1/2)+s(\sigma+1/2)\big)+ O(r^{1-2\se}) 
\label{eq:estimateO}
\end{align}
when $s \in S(r)$; this estimate holds also when $s>0$ is any fixed value. Thus for $s \in S(r)$ and $s^*=\sqrt{(\tau + 1/2)/(\sigma+1/2)}$ we have
\begin{align*}
N(r,s)&\leq r^2\area (\Gamma)-rL\big(s^{-1}(\tau+ 1/2)+s(\sigma+1/2)\big)+ O(r^{1-2\se}), \\
N(r,s^*) & \geq r^2\area (\Gamma)-2rL\sqrt{(\tau+1/2)(\sigma + 1/2)} + O(r^{1-2\se}),
\end{align*}
as $r \to \infty$. Notice $N(r, s^*)\leq N(r,s)$ because $s \in S(r)$ is a maximizing value, and so
\begin{equation} \label{eq:sinverse}
 s^{-1}(\tau + 1/2)+s(\sigma+1/2) \leq 2\sqrt{(\tau + 1/2)(\sigma+1/2)}+  O(r^{-2\se}) .
\end{equation}
Therefore $s =s^*+ O(r^{-\se})$, by \autoref{le:squarecompletion} below with $a = \tau + 1/2, b = \sigma+1/2$. 

For the final statement of the theorem, when $s\in S(r)$ one has 
\[
2\sqrt{(\tau + 1/2)(\sigma+1/2)} \leq s^{-1}(\tau + 1/2)+s(\sigma+1/2)\leq 2\sqrt{(\tau + 1/2)(\sigma+1/2)}+  O(r^{-2\se})
\]
by the arithmetic--geometric mean inequality and \autoref{eq:sinverse}. Multiplying by $rL$ and substituting into \autoref{eq:estimateO} gives the asymptotic formula \autoref{eq:maximalasymptotic}. 

\begin{lemma} \label{le:squarecompletion}
When $a, b, s >0$ and $0 \leq t \leq \sqrt{ab}$, 
\[
s^{-1}a + sb \leq 2\sqrt{ab} +t \quad \Longrightarrow \quad \big| s - \sqrt{a/b} \big| \leq \frac{ 3(ab)^{1/4}}{ b}\sqrt{t} .
\]
\end{lemma}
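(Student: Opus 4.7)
\textbf{Proof proposal for Lemma~\ref{le:squarecompletion}.} The plan is to exploit the fact that $s^{-1}a+sb$ attains its minimum $2\sqrt{ab}$ precisely at $s^*=\sqrt{a/b}$, so the quantity $s^{-1}a+sb-2\sqrt{ab}$ on the left of the hypothesis is a ``deficit'' that factors as a perfect square. The key algebraic identity is obtained by multiplying through by $s$:
\[
s\bigl(s^{-1}a+sb-2\sqrt{ab}\bigr)=a+s^2b-2s\sqrt{ab}=(s\sqrt{b}-\sqrt{a})^2=b\bigl(s-\sqrt{a/b}\bigr)^2.
\]
Hence the hypothesis $s^{-1}a+sb\leq 2\sqrt{ab}+t$ rearranges to
\[
\bigl(s-\sqrt{a/b}\bigr)^2\leq\frac{ts}{b}.
\]

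Next I would derive an a priori upper bound on $s$ to eliminate $s$ from the right-hand side. From the same hypothesis, $sb\leq 2\sqrt{ab}+t$, and the standing assumption $t\leq\sqrt{ab}$ yields
\[
s\leq\frac{2\sqrt{ab}+t}{b}\leq\frac{3\sqrt{ab}}{b}=3\sqrt{a/b}.
\]
Substituting this bound gives
\[
\bigl(s-\sqrt{a/b}\bigr)^2\leq\frac{3t\sqrt{a/b}}{b}=\frac{3t\sqrt{a}}{b^{3/2}},
\]
and taking square roots yields
\[
\bigl|s-\sqrt{a/b}\bigr|\leq\sqrt{3t}\cdot\frac{a^{1/4}}{b^{3/4}}=\sqrt{3t}\cdot\frac{(ab)^{1/4}}{b}\leq\frac{3(ab)^{1/4}}{b}\sqrt{t},
\]
as required (using $\sqrt{3}\leq 3$).

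There is no real obstacle here; the only subtle point is that one must first secure an upper bound on $s$ (using $t\leq\sqrt{ab}$) before one can convert the inequality $(s-\sqrt{a/b})^2\leq ts/b$ into a bound that is linear in $\sqrt{t}$. Without the assumption $t\leq\sqrt{ab}$, the factor of $s$ on the right would not be controlled, and the constant in the conclusion would depend on $t$.
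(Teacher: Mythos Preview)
Your proof is correct. Both your argument and the paper's exploit the perfect-square structure of the AM--GM deficit, but via different factorizations. You multiply through by $s$ to obtain $b(s-\sqrt{a/b})^2\leq ts$ and then bound $s$ from above using $sb\leq 2\sqrt{ab}+t\leq 3\sqrt{ab}$. The paper instead writes the deficit as $\bigl((s^{-1}a)^{1/2}-(sb)^{1/2}\bigr)^2\leq t$, observes that the geometric mean $(ab)^{1/4}$ lies between $(s^{-1}a)^{1/2}$ and $(sb)^{1/2}$, deduces $|(sb)^{1/2}-(ab)^{1/4}|\leq t^{1/2}$, and then squares. Your route is slightly shorter and in fact yields the sharper constant $\sqrt{3}$ before you relax it to $3$; the paper's route avoids the intermediate bound on $s$ but requires the small extra step of invoking the geometric-mean sandwich. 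Either way the hypothesis $t\leq\sqrt{ab}$ is used exactly once, to control the term that would otherwise spoil the $\sqrt{t}$ rate.
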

\begin{proof} By taking the square root on both sides of the inequality 
\[
( (s^{-1}a)^{1/2}-(sb)^{1/2})^2 = s^{-1}a + sb - 2\sqrt{ab} \leq t 
\]
and then using that the number $(ab)^{1/4}$ lies between $(s^{-1}a)^{1/2}$ and $(sb)^{1/2}$ (because it is their geometric mean), we find
\[
 | (ab)^{1/4} - (sb)^{1/2} | \leq t^{1/2} .
\]
Hence $(ab)^{1/4} - t^{1/2} \leq (sb)^{1/2} \leq (ab)^{1/4} + t^{1/2}$. Squaring and using that
$t \leq (ab)^{1/4}t^{1/2}$ (when $t \leq \sqrt{ab}$) proves the lemma.  
\end{proof}

\section{\bf Proof of \autoref{th:neg_shift} and \autoref{prop:neg_ellipse}}

\subsection*{Proof of \autoref{th:neg_shift}}
\label{section:proof_neg}
Fix $\sigma \in (-1,0)$ and $\tau>-1$. Since $0<1+\sigma<1$, we may choose $m \in \N$ large enough that
\begin{equation} \label{eq:countereg}
(1+\sigma)^{2m} < \frac{1}{2m+1} .
\end{equation}
Defining $\phi(x) = 1-x^{2m}$ for $0 \leq x \leq 1$, one checks $\phi(1+\sigma) > \text{area under graph of $\phi$}$. Thus one may choose $0<\delta<1$ small enough that the function
\[
f(x) = 1-\delta x^2 - (1-\delta) x^{2m} , \qquad 0 \leq x \leq 1 ,
\]
satisfies
\[
f(1+\sigma) > \text{area under graph of $f$} .
\]
Observe $f$ is smooth and strictly decreasing, with $f'' < 0$ on $[0,1]$, so that its graph $\Gamma$ is concave. The inverse function $g$ satisfies the same conditions. 

The curve $r\Gamma(r)$ is the graph of $r^2f(x)$ for $0 \leq x \leq 1$. This curve contains only the first column of shifted lattice points (the points with $x$-coordinate $1+\sigma$), and so 
\begin{align*}
N(r,r) 
& = \lfloor r^2 f(1+\sigma) - \tau \rfloor \\
& \geq r^2 f(1+\sigma) - \tau - 1.
\end{align*}

Now fix $0<\epsilon<1$. If $s \in [r^{\epsilon-1},r^{1-\epsilon}]$ then $s + s^{-1} = O(r^{1-\epsilon})$, and so \autoref{th:asy_pos} with $q=1-\epsilon$ and $L=M=1$ gives that
\begin{align*}
N(r,s) 
& = r^2 \area(\Gamma) - r\big(s(\sigma+1/2) + s^{-1}(\tau + 1/2)\big) + O(r^{2-3\epsilon/2}) \\
& = r^2 \area(\Gamma) + o(r^2) .
\end{align*}
Since $\area(\Gamma) < f(1+\sigma)$, we conclude that for all large $r$,
\[
N(r,s) < N(r,r)
\]
and so $s \notin S(r)$, which proves the theorem. 

\subsection*{Proof of \autoref{prop:neg_ellipse}}
\label{section:proof_prop}

By symmetry, we may suppose $\sigma \leq -2/5$.

The argument is the same as for \autoref{th:neg_shift}, except now the curve is a quarter circle, described by $f(x) = \sqrt{1-x^2}$. The only point to check in the proof is that 
\[
f(1+\sigma) > \area(\Gamma) 
\]
when $-1<\sigma\leq-2/5$, which reduces to the fact that $4/5 > \pi/4$.

\section{\bf Numerical examples, and conjectures for triangles $(p=1)$ }\label{sec:numerics}
 
\autoref{fig:optimal_s_p_2}(a) illustrates the convergence of $s \in S(r)$ to $s^*$, when $\Gamma$ is a quarter circle and the shifts are positive. The convergence is erratic, while still obeying the decay rate $O(r^{-1/6})$ as promised by \autoref{th:S_limit_shift}. \autoref{fig:optimal_s_p_2}(b) shows the degeneration that can occur when the shifts are negative, as explained in \autoref{prop:neg_ellipse}. 

\begin{figure}
\includegraphics[scale=0.4]{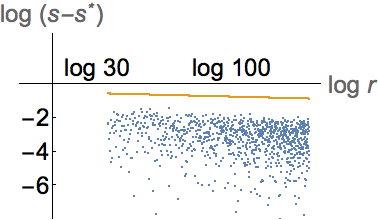} \hspace{1cm}
\includegraphics[scale=.4]{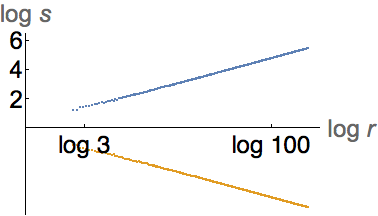}
\caption{\label{fig:optimal_s_p_2}Maximizing $s$-values for the number of lattice points in the $2$-ellipse. (a) Left figure: positive shift $\sigma=1,\tau = 3$. The plot shows $\log(\sup S(r)-s^*)$ versus $\log r$. The line $-1/6 \log r$ indicates the guaranteed convergence rate in \autoref{th:S_limit_shift}. (b) Right figure: negative shifts $\sigma= \tau = -2/5$. The plot shows $\log(\sup S(r))$ and $\log(\inf S(r))$ versus $\log r$. Linear fitting gives $\log \sup S(r) \simeq 0.982\log r +0.254$, which is consistent with the growth rate $s \gtrsim r^{1-\epsilon}$ proved in \autoref{prop:neg_ellipse}. In both plots, the $r$-values are multiples of $\sqrt{3}/10$, an irrational number chosen in the hope of exhibiting generic behavior.}
\end{figure}

Quite different behavior occurs when $\Gamma$ is a straight line with slope $-1$, in other words, when the curve is the $1$-ellipse described by $f(x)=1-x$, which is not covered by our results in \autoref{ex:p_shift}. Here $N(r,s)$ counts the shifted lattice points inside the right triangle with vertices at $(r/s,0), (0,rs)$ and the origin. \autoref{thm:s_bounded_shift} insures the maximizing set $S(r)$ is bounded above and below, being contained in $\big[B(\tau,\sigma)^{-1}-\e,B(\sigma,\tau)+\e \big]$ for all large $r$. This boundedness depends on Parameter Assumption~\autoref{pa:concave} holding, which in this case says
\[
(2-\max(\sigma^-,\tau^-))(1-2\sigma^- - 2\tau^-) > 1 .
\]
In particular, $S(r)$ is bounded for the $1$-ellipse if $\sigma=\tau>-0.117$. \autoref{th:S_limit_shift} for convergence of $S(r)$ does not apply, though, to the $1$-ellipse. 

The numerical plots in \autoref{fig:optimal_s_p_1} suggest $S(r)$ might not converge, and might instead cluster at many different heights. Are those heights determined by a number theoretic property of some kind? (Such behavior would be particularly interesting when the shifts are $\sigma=\tau=-1/2$, since those shifted lattice points correspond to energy levels of harmonic oscillators in $2$-dimensions, as explained in the next section.) For a more detailed discussion and precise conjecture on this open problem for $p=1$ in the unshifted case, see our work in \cite[Section~9]{Lau_Liu17} and the partial results of Marshall and Steinerberger \cite{marshall_steinerberger}.

The numerical method that generated the figures is described in \cite[Section~9]{Lau_Liu17} for $p=1$. It adapts easily to handle other values of $p$, in particular $p=2$ (the circle), and the code is available in \cite[Appendix~B]{thesis}. 

\begin{figure}
\includegraphics[scale=0.35]{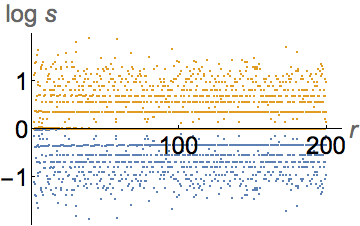} \hspace{.35cm}
\includegraphics[scale=0.35]{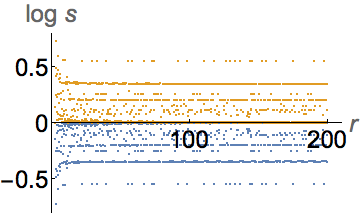} \hspace{.35cm}
\includegraphics[scale=0.35]{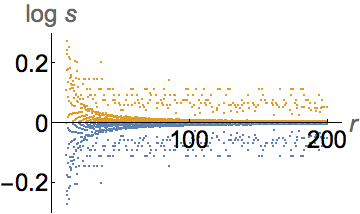}
\caption{\label{fig:optimal_s_p_1}Maximizing $s$-values for the number of lattice points in the $1$-ellipse (that is, the right triangle). The upper plots show $\log \sup S(r)$ versus $r$ and the lower plots are $\log \inf S(r)$ versus $r$. The figure on the left is for shift parameters $\sigma=\tau = -1/2$, which corresponds to counting eigenvalues of the harmonic oscillator (\autoref{section_spectral}). The middle figure has $\sigma= \tau =0$, and the figure on the right has $\sigma= \tau =4$. Notice the optimal stretch parameters are bounded in a narrower and narrower band as the shift parameters increase.}
\end{figure}

\section{\bf Future directions --- optimal quantum oscillators}\label{section_spectral}

\subsection*{Literature on spectral minimization}
Antunes and Freitas \cite{AF13} investigated the problem of maximizing the number of first-quadrant lattice points in ellipses with fixed area, and showed that optimal ellipses must approach a circle as the radius approaches infinity. In terms of eigenvalues of the Dirichlet Laplacian on rectangles having fixed area, their result says that the rectangle minimizing the $n$-th eigenvalue must approach a square as $n \to \infty$. Their intuition is that high eigenvalues should be asymptotically minimal for the ``most symmetrical'' domain. 

Besides Antunes and Freitas's work on eigenvalue minimization \cite{AF12,AF13,AF16,f16}, we mention that van den Berg and Gittins \cite{BBG16b} showed the cube is asymptotically minimal in $3$-dimensions as $n \to \infty$, while Gittins and Larson \cite{GL17} handle all dimensions $\geq 2$. Van den Berg, Bucur and Gittins \cite{BBG16a} proved an analogous asymptotic maximization result for eigenvalues of the Neumann Laplacian on rectangles. Bucur and Freitas \cite{BF13} showed for general domains in $2$ dimensions that eigenvalue minimizing regions become circular in the limit, under a perimeter normalization. Larson \cite{Larson} shows among convex domains that the disk asymptotically minimizes the Riesz means of the  eigenvalues, for Riesz exponents $\geq 3/2$. Eigenvalue minimizing domains have been studied numerically by Oudet \cite{Oud04}, Antunes and Freitas \cite{AF13}, and Antunes and Oudet \cite{Freitas_oudet}, \cite[Chapter 11]{H17}. Incidentally, Colbois and El Soufi \cite[Corollary~2.2]{colbois_soufi} proved subadditivity of $n \mapsto \lambda_n^*$ (the minimal value of the $n$-eigenvalue), from which it follows that the the famous P\'olya conjecture $\lambda_n \geq 4\pi n /\area$ in $2$-dimensions would be a corollary of the conjecture that the eigenvalue minimizing domain approaches a disk as $n \to \infty$. 

A different way of extending the work of Antunes and Freitas is to investigate lattice point counting inside more general curves, not just ellipses. Laugesen and Liu \cite{Lau_Liu17} and Laugesen and Ariturk \cite{AL17} showed this can be done for $p$-ellipses with $p \neq 1$, and for more general concave and convex curves in the first quadrant too. Marshall \cite{marshall} has extended the results to strongly convex domains in all dimensions, using somewhat different methods. For more on the literature see \cite{AL17}.

\subsection*{Spectral application of \autoref{prop:neg_ellipse}}
This paper sheds new light on the rectangular result of Antunes and Freitas. Consider the family of rectangles defined by
\[
[0,2\pi s^{-1}] \times [0,2\pi s]
\]
for $s>0$. The ``even--even'' eigenfunctions (that are symmetrical with respect to the two axes through the center) have the form
\[
u = \sin \big( s(j-1/2)x \big) \sin \big( s^{-1}(k-1/2)y \big) 
\]
with corresponding eigenvalues
\[
\lambda = \Big( s(j-\frac{1}{2}) \Big)^{\! 2} + \Big( s^{-1}(k-\frac{1}{2}) \Big)^{\! 2} ,
\]
for $j,k \geq 1$. These even--even eigenvalues have counting function 
\begin{align*}
& \# \{ \lambda \leq r^2 \} \\
& = \text{number of points in the shifted lattice $(\N - 1/2) \times (\N-1/2)$ lying} \\
& \hspace{3cm} \text{inside or on the ellipse $(sx)^2+(s^{-1}y)^2 \leq r^2$} \\
& = N(r,s)
\end{align*}
where the shift parameters are $\sigma=\tau=-1/2$ and the curve $\Gamma$ is the quarter-circle.

\autoref{prop:neg_ellipse} says the set $S(r)$ of $s$-values that maximize the counting function does \emph{not} approach $1$ as $r \to \infty$. Instead, the maximizing $s$-values approach $0$ or $\infty$. Thus the even--even symmetry class of eigenvalues on the rectangle behaves quite differently from the full collection of eigenvalues studied by Antunes and Freitas. The asymptotically optimal rectangle for maximizing the counting function as $r \to \infty$ (or equivalently, minimizing the $n$-th eigenvalue as $n \to \infty$) is not the square but rather the degenerate rectangle. 

\subsection*{Open problem for harmonic oscillators}
A quantum analogue of the Antunes--Freitas theorem for rectangles would be to minimize the $n$-th energy level among the following family of harmonic oscillators. For each $s>0$, consider 
\begin{equation} \label{eq:harmonicosc}
-\Delta u + \frac{1}{4} \big( (sx)^2 + (s^{-1}y)^2 \big) u = E u , \qquad x,y \in \R ,
\end{equation}
with boundary condition $u \to 0$ as $|(x,y)| \to \infty$. Write $s_n$ for an $s$-value that minimizes the $n$-th eigenvalue $E_n$. By analogy with Antunes and Freitas's theorem for Dirichlet rectangles, one might conjecture that $s_n \to 1$ as $n \to \infty$. In fact, the behavior is quite different, as we now explain. 

Let us translate the harmonic oscillator problem into a shifted lattice point counting problem. The $1$-dimensional oscillator equation $-u^{\prime \prime} + \frac{1}{4} x^2 u = E u$ has eigenvalues $E=j-1/2$ for $j=1,2,3,.\dots$. By separating variables and rescaling, one finds that equation \autoref{eq:harmonicosc} has spectrum 
\[
\{ E_n \} = \{ s(j-1/2) + s^{-1}(k-1/2) : j,k = 1,2,3,\dots \} .
\] 
Hence the number of harmonic oscillator eigenvalues less than or equal to $r$ equals the number of points in the shifted lattice $(\N - 1/2) \times (\N - 1/2)$ lying below the straight line $sx+s^{-1}y=r$, which is given by our counting function $N(r,s)$ where $\Gamma$ is the straight line $y=1-x$ (the $1$-ellipse) and the shift parameters are $\sigma=\tau=-1/2$. To minimize the eigenvalues we should maximize the counting function. 

The numerical evidence in the left part of \autoref{fig:optimal_s_p_1} suggests that the $s$-values maximizing the counting function $N(r,s)$ do not converge to $1$ as $r \to \infty$. Rather, the optimal $s$-values seem to cluster at various heights. (For a precise such clustering conjecture in the unshifted case, see \cite[Section~9]{Lau_Liu17}.) Thus the family of harmonic oscillators exhibits strikingly different spectral behavior from the family of Dirichlet rectangles. 

\subsection*{Interpolating family of Schr\"{o}dinger potentials}
The family of Schr\"{o}dinger potentials $|sx|^q + |s^{-1}y|^q$, where $2<q<\infty$ and $s>0$, interpolates between the harmonic oscillator ($q=2$) and the infinite potential well ($q=\infty$) that corresponds to the Dirichlet Laplacian on a rectangular domain. We conjecture that when $2<q<\infty$, the set $S(r)$ of values maximizing the eigenvalue counting function will converge to $1$ as $r \to \infty$. This conjecture would provide a $1$-parameter family of quantum oscillators for which the analogue of the Antunes--Freitas theorem holds true, with the family terminating in an exceptional endpoint case: the harmonic oscillator. 

The difficulty is that the eigenvalues of the $1$-dimensional oscillator with potential $|x|^q$ do not grow at a precisely regular rate. Hence to tackle the conjecture, one will need to extend the current paper from shifted lattices, where each row and column of the lattice is translated by the same amount, and find a way to handle \emph{deformed} lattices, where the amount of translation varies with the rows and columns. This challenge remains for the future.

\section*{Acknowledgments}
This research was supported by grants from the Simons 
Foundation (\#429422 to Richard Laugesen) and the University 
of Illinois Research Board (award RB17002). 
The material in this paper forms part of Shiya Liu's Ph.D. dissertation at the University of Illinois, Urbana--Champaign \cite{thesis}. 

\newpage

\end{document}